\documentclass[12pt]{amsart}

\usepackage[UKenglish]{babel}
\usepackage{lmodern}
\usepackage[margin=25mm,bottom=35mm,footskip=15mm,top=35mm]{geometry}

\usepackage{
  enumerate,
  url,
  amssymb,
  amsmath,
  xstring,
  enumitem,
  hyperref,
  subcaption,
  floatrow,
  setspace
}

\usepackage[numbers]{natbib}
\usepackage{xcolor}
\usepackage{tikz}
\usetikzlibrary{calc,arrows,shapes}

\usepackage{orcidlink}
\usepackage{listings}
\usepackage{mathtools}

\tikzset{
  basepoly/.style={draw=black!35, line width=.6pt},
  guide/.style={draw=black!25, dashed, line width=.5pt},
  hole/.style={draw=blue!70!black, line width=1.3pt},
  chain/.style={draw=black!65, line width=.8pt},
  vertex/.style={circle, fill=black, inner sep=1.7pt},
  chosen/.style={circle, fill=blue!70!black, inner sep=2.15pt},
  added/.style={circle, fill=violet!80!black, inner sep=2.0pt},
  omitted/.style={circle, draw=red!70!black, fill=white, line width=.7pt, inner sep=2.0pt},
  blockpt/.style={circle, fill=black, inner sep=1.45pt},
  blockchosen/.style={circle, fill=blue!70!black, inner sep=2.0pt}
}

\usepackage{aliascnt}
\usepackage[capitalise,noabbrev]{cleveref}

\crefname{appsec}{Appendix}{Appendices}
\crefformat{equation}{#2(#1)#3}

\theoremstyle{plain}

\newtheorem{theorem}{Theorem}[section]

\newaliascnt{proposition}{theorem}
\newtheorem{proposition}[proposition]{Proposition}
\aliascntresetthe{proposition}

\newaliascnt{lemma}{theorem}
\newtheorem{lemma}[lemma]{Lemma}
\aliascntresetthe{lemma}

\newaliascnt{claim}{theorem}

\aliascntresetthe{claim}

\newaliascnt{corollary}{theorem}

\aliascntresetthe{corollary}

\newaliascnt{conjecture}{theorem}

\aliascntresetthe{conjecture}

\newaliascnt{observation}{theorem}

\aliascntresetthe{observation}

\newtheorem*{question*}{Question}

\theoremstyle{definition}

\newaliascnt{definition}{theorem}

\aliascntresetthe{definition}

\newaliascnt{question}{theorem}

\aliascntresetthe{question}

\newaliascnt{example}{theorem}

\aliascntresetthe{example}

\theoremstyle{remark}
\newtheorem*{remark}{Remark}

\crefname{theorem}{Theorem}{Theorems}
\crefname{proposition}{Proposition}{Propositions}
\crefname{lemma}{Lemma}{Lemmas}
\crefname{claim}{Claim}{Claims}
\crefname{corollary}{Corollary}{Corollaries}
\crefname{conjecture}{Conjecture}{Conjectures}
\crefname{observation}{Observation}{Observations}
\crefname{definition}{Definition}{Definitions}
\crefname{question}{Question}{Questions}
\crefname{example}{Example}{Examples}

\Crefname{theorem}{Theorem}{Theorems}
\Crefname{proposition}{Proposition}{Propositions}
\Crefname{lemma}{Lemma}{Lemmas}
\Crefname{claim}{Claim}{Claims}
\Crefname{corollary}{Corollary}{Corollaries}
\Crefname{conjecture}{Conjecture}{Conjectures}
\Crefname{observation}{Observation}{Observations}
\Crefname{definition}{Definition}{Definitions}
\Crefname{question}{Question}{Questions}
\Crefname{example}{Example}{Examples}

\newcommand{\xqed}[1]{%
  \leavevmode\unskip\penalty9999 \hbox{}\nobreak\hfill
  \quad\hbox{\ensuremath{#1}}%
}
\newcommand{\Endofdef}{\xqed{\lozenge}}

\title[Many holes but no large one]{Many holes but no large one: maximizing \(k\)-holes while forbidding \((k+1)\)-holes}

\title[Many holes but no large one]{
Many holes but no large one: maximizing \(k\)-holes while forbidding \((k+1)\)-holes
}

\author[M. Andri\v{c}ik, A. Dom\'anyov\'a, A. D\v{z}avoronok, A. D\v{z}uklevski, and M. \v{S}afr\'anek]{
Martin Andri\v{c}ik
\and
Alica Dom\'anyov\'a
\and
Adam D\v{z}avoronok
\and
Aleksa D\v{z}uklevski\
\and
Matou\v{s} \v{S}afr\'anek
}

\address{
Department of Applied Mathematics,
Faculty of Mathematics and Physics,
Charles University,
Czech Republic
}

\email{martin.andricik@gmail.com}
\email{alica@domany.sk}
\email{adam.dzavoronok@mff.cuni.cz}
\email{aleksa@mff.kam.cuni.cz}
\email{matous.safranek@gmail.com}

\begin{document}

	\maketitle
 	
\begin{abstract}
We study the maximal number $m_{k,\ell,n}$ of empty convex $k$-gons
(\emph{$k$-holes}) determined by an $n$-point set in the plane in general position
that contains no empty convex $\ell$-gon, focusing on the first nontrivial case
$\ell=k+1$. Our main result determines the exact value in the small-excess regime:
for $n=k+a$ with $a\le k/2-1$, we prove
$ m_{k,k+1,k+a}=2^a.$
We also describe the structure of the extremal configurations attaining equality. Beyond this exact range,
we provide upper and lower bounds in the proportional regime $n=\alpha k$ and in the
regime where $k$ is fixed and $n$ tends to infinity. In the latter regime we prove that
$m_{k,k+1,n}=\Omega_k(n^{\lfloor k/3\rfloor})$ and
$m_{k,k+1,n}=O_k(n^{\lfloor k/2\rfloor+1})$.
\end{abstract}

\section{Introduction}

The classical theorem of Erd\H{o}s and Szekeres \cite{ErdosSzekeres1935} states that
for every integer \(n\ge 3\), every sufficiently large set of points in the plane in
general position contains \(n\) points in convex position. Erd\H{o}s later asked for the empty analogue of this statement \cite{Erdos1978}: for
which integers \(k\) must every sufficiently large planar point set in general position
contain an empty convex \(k\)-gon? We call such polygons 
\(k\)-\emph{holes}. It is easy to see that every set of at least five points contains a
\(4\)-hole, and Harborth proved that every set of \(10\) points contains a \(5\)-hole
\cite{Harborth1978}. On the other hand, Horton constructed arbitrarily large point sets
with no \(k\)-hole for any \(k\ge 7\), showing that the empty version of the
Erd\H{o}s-Szekeres theorem fails in this range \cite{Horton}. The remaining case
\(k=6\) is much more interesting: Nicol\'as and Gerken independently proved that every
sufficiently large point set contains a \(6\)-hole
\cite{Nicolas2007,Gerken2008}. More recently, Heule and Scheucher proved that every set
of \(30\) points in general position contains a \(6\)-hole, matching Overmars construction
of a \(29\)-point set with no \(6\)-hole~\cite{Overmars2002,HeuleScheucher2024}.

A natural quantitative companion to these existence questions is to count holes. For a
fixed \(k\), let \(h_k(n)\) denote the minimum number of \(k\)-holes that every
\(n\)-point set in general position must determine. The systematic study of this problem
goes back at least to Katchalski and Meir~\cite{KatchalskiMeir1988}, B\'ar\'any and
F\"uredi~\cite{BaranyFuredi1987}, and Dehnhardt~\cite{Dehnhardt1987}. For small polygons,
the asymptotic behaviour is well understood: it is well known that the minimum number of empty triangles and empty
convex quadrilaterals both grow quadratically with the size of the point set (see for example \cite{BaranyValtr2004}). For larger holes, the problem becomes
substantially harder. In particular, the case \(k=5\) has received considerable attention:
Aichholzer et al. proved the superlinear lower bound
\(\Omega(n\log(n)^{4/5})\)~\cite{AichholzerEtAl2020}, and very recently, Astudillo-Marb\'an and Sol\'e-Pi improved this to \(\Omega(n^{20/11})\)~\cite{AstudilloMarbanSolePi2026}. The value \(h_6(n)\) is known to grow at least linearly in \(n\) (e.g. \cite{BaranyValtr2004}). For larger holes, the guaranteed number drops
sharply, since
\(h_k(n)=0\) for every \(k\ge 7\), by Horton's construction.

In this paper, we look at the opposite extremal direction. If one asks for the maximum
possible number of \(k\)-holes in an \(n\)-point set without any further restriction, then
the answer is simply \(\binom{n}{k}\), attained by placing all points in convex position.
Thus, the unrestricted maximisation problem is trivial. We obtain a nontrivial problem by forbidding larger empty polygons. In other words, we ask how many \(k\)-holes a point set can have if it  contains no  empty convex \(\ell\)-gon. The most natural first case is \(\ell=k+1\): we want many holes of size \(k\), but no hole just one vertex larger.

For integers \( 3\le k<\ell\leq n\), let \(\mathcal{X}_{\ell,n}\) denote the family of all \(n\)-point sets in the plane in general position that contain no \(\ell\)-hole. For
\(X\in\mathcal{X}_{\ell,n}\), let \(M_k(X)\) be the number of \(k\)-holes determined by
\(X\), and define
$$
m_{k,\ell,n}:=\max\{M_k(X):X\in\mathcal{X}_{\ell,n}\}.
$$

We study \(m_{k,k+1,n}\) in two main regimes. The first is the proportional regime
\(n=\alpha k\), where \(k\) grows and each \(k\)-hole uses a positive fraction of all
points. In the initial range \(n=k+a\) with \(a\le k/2-1\), we determine the exact value.
The proof is based on a simple but useful geometric union lemma: under suitable
intersection assumptions, the union of three holes is again a hole. This turns the absence
of \((k+1)\)-holes into a strong uniqueness statement for how a \(k\)-hole can differ
from a fixed one.

\begin{theorem}\label{thrm1}
Let \(n=k+a\) and assume \(2a+2\le k\). Among all \(n\)-point sets in general
position with no \((k+1)\)-hole, the maximum possible number of \(k\)-holes is
\(2^a\). Furthermore, there exists a point set with \(n=k+a\) points, \(2^a\)
holes of size \(k\), and no \((k+1)\)-hole.
\end{theorem}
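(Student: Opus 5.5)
We will prove \cref{thrm1} in two parts: an upper bound of \(2^a\) obtained by fixing one \(k\)-hole and encoding every other \(k\)-hole relative to it, and a construction attaining \(2^a\).

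\textbf{Upper bound.} Suppose \(X\) has \(n=k+a\) points, no \((k+1)\)-hole, and at least one \(k\)-hole \(H_0\). Put \(R=X\setminus H_0\), so \(|R|=a\). Any other \(k\)-hole \(H\) satisfies \(|H\cap H_0|\ge k-a\ge a+2\), since \(2a+2\le k\). The two holes therefore share many vertices.

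The key tool is the geometric union lemma mentioned in the introduction, which we will prove first. In the form we need: if two or three \(k\)-holes pairwise share at least \(a+2\) (in particular more than half of their) vertices, arranged compatibly along the hull, then their union is in convex position and empty. With this lemma, the absence of \((k+1)\)-holes gives the following rigidity statements.

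\begin{enumerate}[label=(\roman*)]
\item Two distinct \(k\)-holes \(H,H'\) with \(H\cap R=H'\cap R\) cannot coexist. Otherwise \(H\cup H'\) would be a hole with more than \(k\) vertices, and any \(k+1\) of its vertices would form a \((k+1)\)-hole. (Every subset of a hole in convex position is again a hole.)
\item Consequently, the map \(H\mapsto H\cap R\) is injective on \(k\)-holes. This already gives the bound \(2^a\), provided we show that each \(H\) is determined by \(H\cap R\).
\end{enumerate}

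Concretely, we will show that for each \(S\subseteq R\) at most one \(k\)-hole \(H\) satisfies \(H\cap R=S\). Such an \(H\) consists of \(S\) together with exactly \(k-|S|\) points of \(H_0\); we can think of it as \(H_0\) with \(|S|\) of its vertices swapped out for the points of \(S\). If two different such holes \(H,H'\) existed, we would apply the union lemma to \(H\), \(H'\) and, where needed, \(H_0\). The intersections are large because \(k\ge 2a+2\), and the union would then be a hole of size at least \(k+1\), a contradiction.

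\textbf{Main obstacle.} The hardest step is proving the union lemma. One must check two things:
\begin{enumerate}[label=(\alph*)]
\item \emph{Convexity of the union.} The large common part forces the added points of each hole to lie in the "caps" beyond distinct edges of the common convex chain. Hence the combined cyclic order is convex.
\item \emph{Emptiness of the union.} Any point inside the union must lie inside one of the constituent holes. This requires that the regions added by each hole are not covered by the other holes.
\end{enumerate}
I would handle both via a cyclic-order argument. Since the shared set \(C\) has \(|C|>|H\setminus C|+1\), some two consecutive vertices of \(C\) are adjacent in both holes. The non-shared vertices of each hole then sit in pockets between consecutive shared vertices. In each pocket one analyses triangles: if the pockets of \(H\) and \(H'\) coincide, one derives convexity or a contradiction with emptiness. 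This case analysis is where the hypothesis \(2a+2\le k\) is used essentially, and it is the step I expect to need the most care.

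\textbf{Construction.} Take a convex \(2a\)-gon's worth of structure. Start from a convex \(k\)-gon whose vertices include \(a\) disjoint consecutive pairs \(p_i,q_i\). For each \(i\), place one extra point \(r_i\) just outside the edge \(p_iq_i\), very close to its midpoint. Then perturb so that \(r_i\) is slightly inside the line through the neighbours. This is arranged so that, for each \(i\), exactly one of the following is convex and empty:
\begin{itemize}
\item keeping \(p_i,q_i\) and omitting \(r_i\); or
\item replacing the pair \(\{p_i,q_i\}\) by a configuration that uses \(r_i\) and omits one of \(p_i,q_i\).
\end{itemize}
Concretely, choose \(r_i\) so that \(r_i\) together with \(p_i\) and \(q_i\) is never convex with the rest. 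Each \(r_i\) then acts as a swap for, say, \(q_i\). The \(2^a\) independent choices give \(2^a\) distinct \(k\)-holes. The pairs are separated by at least one other vertex, which is possible since \(k\ge 2a+2\), so the choices do not interfere. Any \((k+1)\)-hole would need both \(q_i\) and \(r_i\) for some \(i\), and that is excluded by the reflex placement.
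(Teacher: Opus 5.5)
\textbf{Upper bound.} Your reduction is the paper's: fix a $k$-hole $H_0$ and show that $H\mapsto H\cap R$ is injective. The gap is the union lemma, which carries all the content and which you neither state correctly nor prove. As stated it is false. Take a convex $k$-gon $v_1\dots v_k$ and one point $u$ just inside, near the midpoint of $v_1v_2$ (so $a=1$). The $k$-holes $\{v_1,u,v_3,\dots,v_k\}$ and $\{u,v_2,v_3,\dots,v_k\}$ share $k-1\ge a+2$ vertices, yet $u$ is interior to their union. ``Arranged compatibly along the hull'' would have to carry the whole statement, and it is never defined. Your application also uses the wrong third set. Since $H\cup H'\cup H_0=S\cup H_0$ has $k+|S|$ points, no correct lemma can make it a hole when $S\neq\emptyset$ in a set without $(k+1)$-holes. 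The missing idea is to take the third hole to be $M:=(H\cap H_0)\cup(H'\cap H_0)$, which is a hole because $M\subseteq H_0$. With $C=H\cap H_0$ and $C'=H'\cap H_0$, every point of $H\cup H'$ lies in at least two of $H,H',M$. Moreover $|H\cap H'\cap M|=|C\cap C'|\ge 2(k-|S|)-k\ge k-2a\ge 2$, and this is the only place $2a+2\le k$ is used. The lemma you then need has a short proof with no cyclic-order or pocket analysis. Suppose every point of $K\cup L\cup N$ lies in at least two of the holes $K,L,N$, and two points $p\ne q$ lie in all three; then the union is a hole. By Carath\'eodory it suffices to show that every triangle $xyz$ on the union is empty. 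The only nontrivial case is $x\in K\cap L$, $y\in L\cap N$, $z\in N\cap K$. The points $p,q$ cannot both lie in $\triangle xyz$: otherwise $q$ would lie inside one of $\triangle xyp$, $\triangle yzp$, $\triangle zxp$, each of which is spanned by points of a single hole. So, say, $p\notin\triangle xyz$, and then these three empty triangles cover $\triangle xyz$. Applied to $H,H',M$, this makes $H\cup H'\supsetneq H$ a hole with more than $k$ points, a contradiction.

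\textbf{Construction.} As written, the construction fails. If $r_i$ lies just outside $p_iq_i$ near its midpoint, the $k$-gon together with $r_i$ is already a $(k+1)$-hole. Your options ``keep $p_i,q_i$'' versus ``swap $q_i$ for $r_i$'' include the all-keep choice, which is the $k$-gon itself, so every $r_i$ must lie outside the $k$-gon. But near the midpoint of $p_iq_i$, every outside position makes $\{p_i,q_i,r_i\}$ convex with the rest. Moving $r_i$ ``slightly inside'' instead puts it in the interior of the $k$-gon, which destroys the all-keep hole. Separately, separating the pairs by extra vertices needs $3a\le k$, which does not follow from $2a+2\le k$ (take $a=4$, $k=10$). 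What works is to put $u_i$ just \emph{inside}, near the midpoints of $a$ vertex-disjoint edges $v_{2i-1}v_{2i}$; this needs only $k\ge 2a$. The $k$-gon itself is then not a hole. Each of the $2^a$ holes keeps every $u_i$ and drops exactly one endpoint of each such edge. To exclude $(k+1)$-holes, choose $u_i$ so close to its edge that $u_i\in\operatorname{int}\triangle v_{2i-1}v_{2i}x$ for every other point $x$. A set with at most two points from each triple $\{v_{2i-1},u_i,v_{2i}\}$ has at most $k-2a+2a=k$ points. Hence every $(k+1)$-set contains a full triple plus another point, and so is not in convex position.
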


The lower bound construction for \Cref{thrm1} is the following. We start with a convex \(k\)-gon and  place
one additional point very close to some \(a\) pairwise nonadjacent edges.
Beyond this exact range, we continue to investigate the proportional regime $n=\alpha k$ for $\alpha\ge3/2$. There, the
answer is no longer determined exactly. However, for the upper bound  we use same proof strategy. We combine
geometric statements about unions of holes with some auxiliary hypergraph counting argument. For the lower bound, we present
separate chain constructions that produce many \(k\)-holes by taking consecutive points from disjoint chains, under the assumption that $n$ is bounded by $k^2/2$. We defer the exact statements until Section~\ref{sec:proportional}. 

The second main regime is when \(k\) is fixed and \(n\to\infty\). This is perhaps the most natural asymptotic. 
We prove the following upper bound.

\begin{theorem}\label{thm:upper-bound-fixed-k}
    Let $k$ be an  integer greater than or equal to $6$. Then 
    $$m_{k,k+1,n}\leq 
    \begin{cases} \frac{2n-k/2}{k}\binom{n}{k/2} & \text{if } 2\mid k, \\
                 
 \frac{\lceil k/2\rceil}{k}\binom{n}{\lfloor k/2\rfloor+1}                                    & \text{if } 2\nmid k .    %
        \end{cases}$$
\end{theorem}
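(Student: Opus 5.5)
We will prove \Cref{thm:upper-bound-fixed-k} by a charging argument: to every \(k\)-hole we assign a subset of its vertices of size about \(k/2\), and we show that each such subset is charged by few \(k\)-holes. The idea is that a \(k\)-hole \(H\) is determined by a well-chosen set of roughly half of its vertices, provided \(X\) has no \((k+1)\)-hole.

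\emph{Step 1 (local uniqueness).} Let \(H\) be a \(k\)-hole with vertices \(v_1,\dots,v_k\) in cyclic order. We claim that if two \(k\)-holes share a long enough pattern of vertices, they must coincide. More precisely, fix a "skeleton'' \(S\subseteq V(H)\) consisting of every other vertex, \(S=\{v_1,v_3,v_5,\dots\}\), so \(|S|=\lceil k/2\rceil\). Between consecutive skeleton vertices \(v_{2i-1},v_{2i+1}\) the hole \(H\) uses exactly one point, and that point lies in the region \(R_i\) cut off by the chord \(v_{2i-1}v_{2i+1}\) on the outer side, inside the wedge determined by the neighbouring skeleton edges.

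We will show that if \(H'\) is another \(k\)-hole containing \(S\) with the same cyclic pattern, then in each pocket \(R_i\) both holes use the point that is extreme in the same sense. Otherwise we could insert a second pocket point and obtain a \((k+1)\)-hole. This is where the union lemma promised in the introduction enters: the union of holes that agree outside one pocket and differ inside it is again convex and empty, so it would be a \((k+1)\)-hole. Consequently, for each pocket there are at most two admissible choices, and the choices are linked consistently. This should give that the number of \(k\)-holes containing a fixed skeleton is bounded by a constant depending only on the parity structure, not by a power of \(n\).

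\emph{Step 2 (counting).} For odd \(k\) the skeleton alternation fails at one place, so we take \(S\) of size \(\lfloor k/2\rfloor+1\), with one pair of adjacent skeleton vertices. We then double count pairs \((H,S)\), where \(S\) ranges over the \(k\) rotations of the alternating pattern. Each \(H\) yields \(k\) skeletons. For the count from the other side, we aim to show that each \((\lfloor k/2\rfloor+1)\)-set \(S\) arises from at most \(\lceil k/2\rceil\) pairs, using Step 1 together with a choice of which gap is the doubled one. This gives
\[
k\, m_{k,k+1,n}\le \lceil k/2\rceil\binom{n}{\lfloor k/2\rfloor+1}.
\]

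For even \(k\), skeletons have size \(k/2\), and each \(H\) yields two alternating skeletons (odd or even positions), possibly with a marked starting vertex. Here we expect that a \(k/2\)-set determines the hole up to about \((2n-k/2)/k\) choices, weighted through the rotations: once the remaining vertices are fixed by Step 1, one additional vertex outside \(S\) is free. Double counting then yields the stated \(\frac{2n-k/2}{k}\binom{n}{k/2}\).

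\emph{Main obstacle.} The hard part is Step 1, namely showing rigorously that the pocket points are forced. The two holes \(H,H'\) may differ in several pockets simultaneously, and there may be further points of \(X\) inside the relevant regions. We would first try the configuration in which \(H\) and \(H'\) differ in exactly one pocket, and apply the three-hole union lemma to \(H\), \(H'\) and a suitably chosen intermediate hole. We would then handle several differing pockets by induction on their number. We expect that the hypothesis \(k\ge 6\) is needed exactly here, to guarantee enough shared skeleton vertices for the union lemma's intersection assumptions to hold.
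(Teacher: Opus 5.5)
Your double-counting frame is the same as the paper's. You use alternating skeletons, two per hole for even $k$. For odd $k$ you use the $k$ rotations with a marked doubled gap, and there $k\,m_{k,k+1,n}\le\lceil k/2\rceil\binom{n}{\lfloor k/2\rfloor+1}$ follows as soon as each pair (skeleton, doubled gap) extends to at most one $k$-hole. The gap is Step~1, which carries all of the geometry and is neither proved nor correctly formulated.

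\begin{itemize}
\item \emph{The union lemma does not apply.} If $H,H'$ use different points $m\neq n$ in a pocket, then $m$ lies only in $H$ and $n$ only in $H'$. The ``intermediate hole'' would therefore have to contain both $m$ and $n$, which is essentially the $(k+1)$-hole you are trying to produce.
\item \emph{Inserting the second pocket point need not give a hole.} Suppose $H,H'$ differ only in the pocket between skeleton vertices $u,u'$. Then $H\cup\{n\}$ is convex, but the triangle bounded by $mn$ and the two diagonals of the convex quadrilateral on $\{u,m,n,u'\}$ lies in neither $\operatorname{conv}H$ nor $\operatorname{conv}H'$, so it may contain points of $X$. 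If an adjacent pocket also differs, convexity at the shared skeleton vertex can fail as well.
\item \emph{Your target is not the right statement.} Two $k$-holes with the same skeleton can differ in \emph{every} pocket. In the chain construction of \Cref{prop:chain_construction}, $\{p_{i,j_i}\}_i$ is the alternating skeleton both of the hole using the points $p_{i,j_i+1}$ and of the hole using the points $p_{i,j_i-1}$. So an induction on the number of differing pockets cannot end in $H=H'$.
\item \emph{The $O(1)$ bound per skeleton is unsupported.} Nothing in your sketch gives it, and it would yield $O(n^{k/2})$ for even $k$, stronger than the theorem. It is also not what your even-case count in Step~2 uses.
\end{itemize}

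What you need instead is \Cref{lemma:virtual_buffer}: two extensions $M,N$ of a skeleton $Q$ that agree in one pocket agree everywhere. The proof propagates around the cycle.
\begin{itemize}
\item Suppose $m_i=n_i$ but $m_{i+1}\neq n_{i+1}$. Swap $M$ and $N$ if necessary so that $m_{i+1}$ lies beyond the edge $u_{i+1}n_{i+1}$ of $Q\cup N$.
\item Let $z$ be the point of $X$ in $\triangle u_{i+1}m_{i+1}n_{i+1}$, other than $u_{i+1},n_{i+1}$, that is closest to the line $u_{i+1}n_{i+1}$. Then $\triangle u_{i+1}zn_{i+1}$ is empty.
\item Convexity at $u_{i+1}$ comes from $m_i=n_i$, and convexity at $n_{i+1}$ comes from the emptiness of $Q\cup M$. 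Hence $Q\cup N\cup\{z\}$ is a $(k+1)$-hole.
\end{itemize}
The closest-point choice disposes of the stray points, and the agreeing neighbouring pocket supplies convexity at the skeleton vertex.

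Your remark that ``one additional vertex outside $S$ is free'' is exactly this statement, and with it the counting goes through. Extensions of a fixed $Q$ are pairwise disjoint $k/2$-subsets of $X\setminus Q$, so $2M_k(X)\le\binom{n}{k/2}\frac{n-k/2}{k/2}$. That is, $M_k(X)\le\frac{n-k/2}{k}\binom{n}{k/2}$, which implies the stated even bound.

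For odd $k$, the same propagation started at the doubled edge gives uniqueness. The paper implements this by placing an auxiliary point $p$ just outside the marked edge. Two extensions then share the pocket point $p$ in $X\cup\{p\}$, and $X\cup\{p\}$ still has no $(k+2)$-hole.
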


To outline the proof, we sample alternating vertices from each \(k\)-hole. For example if \(k=2r\), from each \(2r\)-hole, we sample two disjoint alternating empty \(r\)-gons. We then show that, once such an \(r\)-gon is fixed, number of possible extensions to a \(2r\)-hole, from which it could be sampled, is at most linear in $n$. By double counting, we obtain the desired bound.

We complement this upper bound with a construction giving many \(k\)-holes and no
\((k+1)\)-hole. The main ingredient is Horton set construction. We replace the vertices of a convex polygon with small, very flat Horton blocks. Choosing three suitable consecutive
points from each block produces many \(k\)-holes, while the Horton property prevents any
hole from using too many points in a single block.

\begin{theorem}\label{thm:lower-bound-fixed-k}
    Let $k$ be an integer greater than or equal to $6$ and divisible by $3$. Then
    $$m_{k,k+1,n}\ge\left(\frac{3(n-k)}{2k}\right)^{k/3}=\Omega_k(n^{k/3}).$$
\end{theorem}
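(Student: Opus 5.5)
The plan follows the construction sketched in the introduction. Set $r=k/3$, so $r\ge 2$, and let $m=\lfloor (n-k)/r\rfloor$ or a comparable block size, chosen so that the $r$ blocks together use at most $n$ points. Start from a convex $r$-gon with vertices $v_1,\dots,v_r$. Replace each $v_i$ by a small, very flat Horton set $H_i$ of size $s\approx n/r$. Each $H_i$ is placed along a segment nearly tangent to the polygon at $v_i$, that is, roughly parallel to the line through $v_{i-1}$ and $v_{i+1}$, and squeezed so that it looks essentially collinear from far away. Any leftover points are added as dummies in a way that creates no new large holes, for instance as extra points inside one block extending its Horton structure. General position is obtained by a tiny perturbation, which preserves every strict orientation condition used below.

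\textbf{Counting $k$-holes.} Fix the flattening direction so that each $H_i$ is a slightly concave chain as seen from the interior of the big polygon; equivalently, its upper convex hull faces outward. Then any three consecutive points of $H_i$ along this outward hull chain, together with such triples from every other block, form a convex $3r$-gon. The key check is that this polygon is empty. The other points of $H_i$ lie on the inner side of the chain, and the flatness makes them lie outside the polygon spanned with far-away blocks. So each $k$-hole corresponds to a choice of one "window" of three consecutive hull points in each block. The count is then (number of windows)$^r$.

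This requires the outward hull of each block to have about $s$ points, which a plain Horton set does not have. I would therefore modify the block: use a set whose outer hull chain has length $t$, with the remaining points placed as a Horton-type set just inside. Alternatively, take the "windows" to be triples of consecutive points in the Horton order whose triangle is empty and convex-compatible. I expect the number of valid windows per block to be about $\tfrac{3(n-k)}{2k}$, which explains the stated bound $(3(n-k)/(2k))^{k/3}$. Roughly, $(n-k)/r$ spare points per block gives $\tfrac{n-k}{r}\cdot\tfrac12$ windows, since only every other consecutive triple works. I would arrange the block so that this count is exact.

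\textbf{No $(k+1)$-hole.} This is the main obstacle. Suppose $P$ is a convex hole. Because the blocks are tiny and far apart, $P$ meets each block in a set that is itself a hole of that block, and one that is "cap-like" toward the outside. The Horton property, together with the hull-chain design, must force $|P\cap H_i|\le 3$ for every $i$. The argument would show that a cap of 4 points in one block would, by the Horton 7-hole-free structure, which forbids large caps and cups appropriately, either be non-empty or fail convexity with points of the neighbouring blocks. Once $|P\cap H_i|\le 3$ holds for every block, we get $|P|\le 3r=k$.

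The delicate part is choosing the orientation and flatness so that the standard Horton lemma applies. That lemma states that Horton sets have no 4-cap open to a suitable side. So I would set up each block as a Horton set whose relevant side is the outward-facing one. Then any hole meeting the block uses a cap on that side, of size at most 3, while consecutive triples still supply the counted holes.
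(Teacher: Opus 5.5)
Your overall architecture (flat Horton blocks at the vertices of a convex $r$-gon, one three-point window per block, at most three points per block for holes meeting several blocks) matches the paper's. However, the counting step is not established, and the version you develop in detail fails.

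Take your windows to be three consecutive points of a long outward hull chain, with the other points of the block just inside. Then your emptiness claim is false. After flattening, the part of the $3r$-gon near $H_i$ is essentially everything on the inner side of the window over its local $x$-range, so inner points in that range lie \emph{inside} it.

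Worse, a hull chain cannot carry the needed density of windows. Suppose $\{q_j,q_{j+1},q_{j+2}\}\cup R$ and $\{q_{j+1},q_{j+2},q_{j+3}\}\cup R$ are both holes, where $q_j,\dots,q_{j+3}$ are consecutive on a convex chain and $R$ is the set of points chosen in the other blocks. Every vertex of $\{q_j,\dots,q_{j+3}\}\cup R$ has the same two neighbours as in one of these holes, so the set is in convex position, and its hull is the union of the two empty hulls. That is a $(k+1)$-hole. With the flatness you rely on, validity of windows $j$ and $j+2$ also forces validity of window $j+1$. So only about a third of the windows on a chain are usable, giving roughly $(n/k)^{k/3}$ rather than the stated bound.

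The missing idea is an intrinsic property of Horton sets. In the local $x$-order $p_1,\dots,p_m$, the points $p_j,p_{j+2}$ lie in the same half $H_0$ or $H_1$ and $p_{j+1}$ lies in the other. Since one half lies high above the other, $p_jp_{j+1}p_{j+2}$ is a cup or a cap according to the parity of $j$. One parity class therefore supplies at least $(m-2)/2$ cups, after reflecting the block if necessary.

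Emptiness of the combined $3r$-gon then comes from each triple being consecutive in $x$-order (no other point of the block lies over its $x$-range), together with flatness. Two windows of the same parity overlap in only one point and their union is a zigzag, so the union argument above does not apply. For the exact constant, take block sizes $\lfloor n/r\rfloor$ or $\lceil n/r\rceil$ rather than $\lfloor (n-k)/r\rfloor$, so that $(m_i-2)/2\ge (n-3r)/(2r)=3(n-k)/(2k)$.

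In the exclusion step, what bounds $|P\cap H_i|$ for a hole meeting several blocks is $4$-closedness from above and below: every $4$-cup and $4$-cap has a point of the set on its inward side between two consecutive vertices $q_j,q_{j+1}$. This must be combined with a flatness condition ensuring that this closing point lies in $\operatorname{int}\triangle(q_j,q_{j+1},y)$ for every $y$ in another block. The absence of $7$-holes does not by itself forbid such $4$-caps, though your last paragraph essentially uses the right property.

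What you never treat is a hole contained in a single block, to which the cap argument does not apply. There one needs that Horton sets have no $7$-hole, so such holes have at most $6\le k$ vertices. This is where $k\ge 6$ enters, and your modified long-hull-chain blocks would not obviously keep this property.

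Finally, for $k=6$ ($r=2$) there is no convex $r$-gon, and the direction $v_{i-1}v_{i+1}$ is undefined. The paper handles this case with two oppositely oriented flat blocks.
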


\begin{remark}
 One can recover the proof of \Cref{thm:lower-bound-fixed-k} bounds $m_{k,k+1,n}=\Omega_k(n^{\lfloor\frac{k}{3}\rfloor})$ for $k$ not divisible  $3$ . We sketch how one can recover it.
\end{remark}
Lastly, we remark that during the final preparation of this paper, Suk and Zhou independently proved
similar bounds in the fixed \(k\), large \(n\) regime
\cite{SukZhou2026}. The overlap between the two projects was discussed with
the authors of \cite{SukZhou2026}. 
\section{Exact regime: $n<\frac{3}{2}k$}
\subsection*{Upper bound proof}
 We first prove the following auxiliary lemma about the intersection patterns of three distinct holes, which will be crucial for the proof of \Cref{thrm1} and later on. 
\begin{lemma}\label{lem:three-holes-union}
Let \(K,L,M\) be holes in a point set \(X\). Assume that

\begin{enumerate}
\item no point of \(K\cup L\cup M\) belongs to exactly one of the three holes \(K,L,M\), and
\item at least two points belong to all three holes.
\end{enumerate}
Then \(K\cup L\cup M\) is a hole.
\end{lemma}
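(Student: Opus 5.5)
The plan is to reduce every question about \(K\cup L\cup M\) to a question about a single triangle whose three vertices lie in one common hole. Write \(U=K\cup L\cup M\). To show that \(U\) is a hole we must check two things: \(U\) is in convex position, and \(\operatorname{conv}(U)\) contains no point of \(X\setminus U\). By assumption~(2) we may fix two distinct points \(a,b\) lying in all three holes. Assumption~(1) says every point of \(U\) lies in at least two of \(K,L,M\).

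\emph{Fan observation.} Let \(P\) be a finite point set with \(a\in P\), and let \(x\in\operatorname{conv}(P)\). Then there are \(u,v\in P\) with \(x\in\operatorname{conv}\{a,u,v\}\). To see this, if \(x\neq a\), follow the ray from \(a\) through \(x\) until it leaves \(\operatorname{conv}(P)\). It exits through a boundary edge \(uv\) (or through a vertex, in which case \(u=v\)). Then \(x\) lies on the segment from \(a\) to that exit point, so \(x\in\operatorname{conv}\{a,u,v\}\). This works even if \(a\) is interior to \(\operatorname{conv}(P)\), so we never need \(a\) to be a vertex of \(\operatorname{conv}(U)\).

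\emph{Pigeonhole step.} Now take any \(u,v\in U\). The points \(u\) and \(v\) each lie in at least two of the three holes. Any two \(2\)-element subsets of a \(3\)-element set intersect, so some hole \(H\in\{K,L,M\}\) contains both \(u\) and \(v\). It also contains \(a\), since \(a\) lies in all three holes. Hence \(\operatorname{conv}\{a,u,v\}\subseteq\operatorname{conv}(H)\). The same holds with \(b\) in place of \(a\).

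\emph{Convex position.} Suppose some \(q\in U\) lies in \(\operatorname{conv}(U\setminus\{q\})\). Choose \(c\in\{a,b\}\) with \(c\neq q\); this is possible because \(a\neq b\), and it is exactly where we use two common points. The fan observation applied to \(P=U\setminus\{q\}\) with apex \(c\) gives \(u,v\in U\setminus\{q\}\) with \(q\in\operatorname{conv}\{c,u,v\}\). By the pigeonhole step, \(c,u,v\) lie in a common hole \(H\), so \(q\in\operatorname{conv}(H)\) and \(q\notin\{c,u,v\}\). General position guarantees \(q\) is not on a segment, so \(q\) is not a vertex of \(\operatorname{conv}(H)\) in the relevant sense. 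If \(q\in H\), this contradicts the convex position of \(H\). If \(q\notin H\), it contradicts the emptiness of \(H\).

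\emph{Emptiness.} Suppose \(x\in X\setminus U\) lies in \(\operatorname{conv}(U)\). The same argument with apex \(a\) places \(x\) in \(\operatorname{conv}\{a,u,v\}\subseteq\operatorname{conv}(H)\) for some hole \(H\). Since \(x\notin H\), this contradicts the emptiness of \(H\).

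The only step needing care is the fan observation, together with the degenerate cases \(u=v\) or \(u=a\). General position removes collinearity issues: a point of \(X\) never lies on a segment between two others, so all containments above are strict where needed. I expect the main conceptual point, and the only nontrivial one, to be noticing that the apex at a common point combined with "any two pairs of holes intersect" makes every fan triangle lie inside a single hole.
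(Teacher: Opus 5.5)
Your proof is correct and uses the same mechanism as the paper. A point lying in all three holes, combined with the fact that any two points of \(U\) share a hole, puts every cone triangle \(\operatorname{conv}\{a,u,v\}\) inside a single hole, and the second common point is needed only in case the first one lies inside the region being checked. The paper works triangle by triangle: it shows each \(\triangle xyz\) on \(U\) is empty by covering it with \(\triangle xyp\), \(\triangle yzp\) and \(\triangle zxp\), and uses \(q\) to ensure \(p\notin\triangle xyz\). You instead fan all of \(\operatorname{conv}(U)\) from the apex at once, which avoids the case analysis on how \(x,y,z\) sit in the holes and makes explicit the step from ``all triangles empty'' to ``hole'' that the paper leaves implicit.
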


\begin{proof}
Set \( U:=K\cup L\cup M \). We show that every triangle spanned by three points of \(U\) is empty. This implies that \(U\) is a hole. Take any three distinct points \(x,y,z\in U\). If \(x,y,z\) all lie in one of the holes \(K,L,M\), then \(\triangle xyz\) is empty, since each of \(K,L,M\) is a hole. So suppose \(x,y,z\) do not all lie in a single one of \(K,L,M\). Therefore, we may assume, after possible relabelling, that
$$
x\in K\cap L,\qquad
y\in L\cap M,\qquad
z\in M\cap K.
$$

Choose two distinct points \(p,q\in K\cap L\cap M\), which exist by assumption (2). We claim that at least one of \(p,q\) lies outside \(\triangle xyz\). Suppose for contradiction that both \(p\) and \(q\) lie in \(\triangle xyz\). Consider triangles \(\triangle xyp,\triangle yzp,\triangle zxp\). By general position, $q$ lies in one of them, say $\triangle xyp$. However, $x,y,p\in L$, which is a hole, so the triangle has to be empty. This gives  the desired contradiction. We may therefore assume that $p\notin\triangle xyz$.

Now
$$
x,y,p\in L,\qquad y,z,p\in M,\qquad z,x,p\in K.
$$
Therefore, the three triangles
\(
\triangle xyp, \triangle yzp,\triangle zxp
\)
are all empty. Since \(p\notin \triangle xyz\), the triangles
cover \(\triangle xyz\). Hence, if each of them is empty, then \(\triangle xyz\) is empty as well.

\end{proof}
 We now proceed with the upper bound argument for $m_{k,k+1,k+a}$. It follows immediately from the following proposition.
\begin{proposition}\label{unique-subsets}
     Under the assumptions of \Cref{thrm1}, if $K$ is a fixed $k$-hole and $A=X\setminus K$ is the set of remaining $a$ points, then for every $B \subseteq A$, there is at most one $k$-hole that contains all the points in $B$ and whose vertices all lie in $K\cup B$. 
\end{proposition}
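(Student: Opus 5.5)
The plan is to argue by contradiction: assume there are two distinct $k$-holes with these properties, and use \Cref{lem:three-holes-union} to produce a hole with more than $k$ points.

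\textbf{Setup.} Write $b:=|B|$. If $b=0$, a $k$-hole with all vertices in $K$ must be $K$ itself, so assume $b\ge 1$. Suppose $L\neq M$ are two $k$-holes with $B\subseteq L,M\subseteq K\cup B$. Since both have $k$ points and contain $B$, we can write
$$L=B\cup(K\setminus S_L),\qquad M=B\cup(K\setminus S_M)$$
with $S_L,S_M\subseteq K$ and $|S_L|=|S_M|=b$. From $L\neq M$ we get $S_L\neq S_M$. Because the two sets have the same size, this forces $|S_L\cap S_M|<b$.

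\textbf{The obstruction and the fix.} We cannot apply \Cref{lem:three-holes-union} to $K,L,M$ directly. The points of $S_L\cap S_M$ lie only in $K$, which violates condition (1) of the lemma. So I would replace $K$ by $K':=K\setminus(S_L\cap S_M)$. This is still a hole in $X$, because any subset of a hole is in convex position and spans an empty polygon. We then apply the lemma to $K',L,M$ and check its two conditions.

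\textbf{Condition (1).} Every point of $K'\cup L\cup M$ lies in at least two of the three sets:
\begin{itemize}[label={}]
\item points of $B$ lie in $L$ and $M$;
\item points of $K\setminus(S_L\cup S_M)$ lie in all three;
\item points of $S_L\setminus S_M$ lie in $K'$ and $M$;
\item points of $S_M\setminus S_L$ lie in $K'$ and $L$.
\end{itemize}

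\textbf{Condition (2).} The common part $K'\cap L\cap M$ contains $K\setminus(S_L\cup S_M)$. This has size at least $k-2b\ge k-2a\ge 2$, using the hypothesis $2a+2\le k$ of \Cref{thrm1}.

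\textbf{Conclusion.} By \Cref{lem:three-holes-union}, $U:=K'\cup L\cup M=B\cup K'$ is a hole. Its size is
$$|U|=b+k-|S_L\cap S_M|\ge k+1 .$$
Any $k+1$ points of $U$ then form a $(k+1)$-hole, contradicting the assumption. The main step is choosing to shrink $K$ to $K'$ so that condition (1) holds. After that, the only quantitative input is $k-2a\ge 2$, which guarantees the two common points.
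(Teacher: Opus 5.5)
Your proof is correct and is essentially the paper's argument: your shrunken hole $K'=K\setminus(S_L\cap S_M)$ is exactly the paper's third set $C_1\cup C_2$ with $C_i=H_i\cap K$. The paper likewise applies \Cref{lem:three-holes-union} to the two $k$-holes and this subset of $K$, using $|C_1\cap C_2|\ge k-2|B|\ge 2$ for condition (2), and concludes that the union is a hole strictly larger than a $k$-hole, matching your count $|U|\ge k+1$.
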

Since, there are $2^a$ subsets of $A$, and each contributes at most one $k$-hole (note that $a \leq \frac{k}{2}-1$, so we have to use points from $K$ as well, i.e. the $k$-hole has to be of the type described in \Cref{unique-subsets}), so the conclusion follows. We now give the proof, which is just a direct application of \Cref{lem:three-holes-union}.

\begin{proof}
   For the sake of contradiction, suppose that there exists $B\subseteq A$ such that there are two distinct $k$-holes  $H_1$ and $H_2$ such that 
   $$
   H_1\cap A=B \qquad  H_2\cap A=B.
   $$
   Define $C_i:=H_i\cap K$ for $i\in\{1,2\}$ and $M:=C_1\cup C_2\subseteq K$. Our goal is to apply \Cref{lem:three-holes-union} for $H_1,H_2,M$. Observe that $|C_i|=k-|B|$ and moreover, 
  \begin{align*}
  |H_1\cap H_2\cap M|
    &=|C_1\cap C_2|\\
    &=|C_1|+|C_2|-|M|\\
    &\ge 2(k-|B|)-k
     = k-2|B|
     \ge 2.
  \end{align*}
  where we used the fact $|M|\leq k$, $|B|\le|A|=a$, and that $k\ge 2a+2$. Lastly, we verify that each point of $H_1\cup H_2\cup M$ belongs to at least two of the three sets $H_1, H_2, M$. Indeed, $M\subseteq H_1\cup H_2$, $C_i\subseteq M$ and $H_i\setminus C_i=B\subseteq H_1\cap H_2$. So by \Cref{lem:three-holes-union} we obtain that points in $H_1\cup H_2\cup M $ form a hole; however, by our assumption, the $k$-hole $H_1$ is strictly contained in $H_1\cup H_2\cup M$ as $H_1 \neq H_2$. Thus, $H_1\cup H_2\cup M$ is a hole of size at least $(k+1)$, which is the desired contradiction.
\end{proof}
\begin{remark}
The bound \(k\ge 2a+2\) in \Cref{unique-subsets} and \Cref{thrm1} is the best possible. Figure~\ref{fig:sharpness-a3}  depicts a configuration with $a=3$ and $k=7$ where the conclusion does not follow. The $a_i$ points represent the points of \(A\). More generally, the same construction works for arbitrary \(b\): one replaces the three-point chains on the left and on the right of the figure with convex chains of length \(b\), and the three-points of \(A\) with a convex chain of \(b\) points. 
\begin{figure}[htbp]
\centering

\begin{tikzpicture}[scale=.6]
  \definecolor{myblue}{RGB}{20,35,190}
  \definecolor{myteal}{RGB}{52,126,126}
  \definecolor{myred}{RGB}{210,45,35}

  \coordinate (a1) at (-4,0.0);
  \coordinate (a2) at ( 0.0,-0.7);
  \coordinate (a3) at ( 4,0.0);

  \coordinate (b1) at ( 5.2,5.3);
  \coordinate (b2) at ( 6.1,4.7);
  \coordinate (b3) at ( 6.6,3.4);

  \coordinate (c1) at (-5.2,5.3);
  \coordinate (c2) at (-6.1,4.7);
  \coordinate (c3) at (-6.6,3.4);

  \coordinate (e) at (0,2.8);

  \draw[myblue,line width=1.2pt]
    (a1)--(a2)--(a3)--(e)--(c1)--(c2)--(c3)--cycle;

  \draw[myteal,line width=1.2pt]
    (a1)--(a2)--(a3)--(b3)--(b2)--(b1)--(e)--cycle;

  \draw[myred,line width=1.2pt]
    (c3)--(c1)--(b1)--(b3)--(e)--cycle;

  \foreach \p in {a1,a2,a3,b1,b2,b3,c1,c2,c3,e}
    \fill[black] (\p) circle (2.1pt);

  \node[font=\fontsize{10}{10}\selectfont]
    at ($(a1)+(-0.55,-0.25)$) {$a_1$};
  \node[font=\fontsize{10}{10}\selectfont]
    at ($(a2)+(0,-0.55)$) {$a_2$};
  \node[font=\fontsize{10}{10}\selectfont]
    at ($(a3)+(0.55,-0.25)$) {$a_3$};

  \node[font=\fontsize{10}{10}\selectfont]
    at ($(b1)+(0.35,0.55)$) {$b_1$};
  \node[font=\fontsize{10}{10}\selectfont]
    at ($(b2)+(0.55,0.10)$) {$b_2$};
  \node[font=\fontsize{10}{10}\selectfont]
    at ($(b3)+(0.60,-0.10)$) {$b_3$};

  \node[font=\fontsize{10}{10}\selectfont]
    at ($(c1)+(-0.35,0.55)$) {$c_1$};
  \node[font=\fontsize{10}{10}\selectfont]
    at ($(c2)+(-0.55,0.10)$) {$c_2$};
  \node[font=\fontsize{10}{10}\selectfont]
    at ($(c3)+(-0.60,-0.10)$) {$c_3$};

  \node[font=\fontsize{10}{10}\selectfont]
    at ($(e)+(0,0.55)$) {$e$};

\end{tikzpicture}

\caption{Construction depicting sharpness of Proposition~\ref{unique-subsets} for $k\ge2a+2$.}
\label{fig:sharpness-a3}
\end{figure}
\end{remark}
\subsection*{Lower bound construction and structural description}
To finish the proof of \Cref{thrm1}, we proceed with the lower bound.

\begin{proposition}\label{prop:lower-bound}
If \(k\ge2a\), then
$
m_{k,k+1,k+a}\ge 2^a.
$
Equivalently, there exists a point set \(X\in \mathcal X_{k+1,k+a}\) with at least \(2^a\)
distinct \(k\)-holes.
\end{proposition}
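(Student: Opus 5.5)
We use the construction described after \Cref{thrm1}. Start with a convex $k$-gon $P$ with vertices $v_1,\dots,v_k$ in cyclic order. Since $k\ge 2a$, we can choose $a$ pairwise nonadjacent edges $e_1,\dots,e_a$, say $e_j=v_{2j-1}v_{2j}$. For each $j$, place a point $p_j$ \emph{outside} $P$, very close to the midpoint of $e_j$, at distance $\varepsilon$ from it. Let $X=\{v_1,\dots,v_k,p_1,\dots,p_a\}$, and perturb slightly if needed to ensure general position. Then $|X|=k+a$. For $\varepsilon$ small, $p_j$ lies beyond the line through $e_j$ but on the inner side of every other edge line of $P$. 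Because the chosen edges are nonadjacent, the regions near different $e_j$ do not interact.

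\textbf{Counting $k$-holes.} For each $B\subseteq\{1,\dots,a\}$, let $H_B$ be $P$ with, for each $j\in B$, the vertex $v_{2j}$ replaced by $p_j$. First we check that $H_B$ is in convex position. Near $e_j$, the point $p_j$ lies just outside the segment $v_{2j-1}v_{2j}$. So replacing $v_{2j}$ by $p_j$ produces the polygon $\dots v_{2j-1}p_j v_{2j+1}\dots$. Convexity at $p_j$ holds because $p_j$ lies outside the line $v_{2j-1}v_{2j}$, which is beyond the chord $v_{2j-1}v_{2j+1}$. Convexity at the neighbours is a small perturbation of the angle of $P$, and nonadjacency guarantees that these modifications never touch the same vertex twice.

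Next we check that $H_B$ is empty. The only candidates for interior points are the omitted $v_{2j}$ for $j\in B$, and the points $p_i$ with $i\notin B$. Each $p_i$ is outside $P$ near $e_i$. For $i\notin B$ the edge $e_i$ is still an edge of $H_B$, so $p_i$ lies outside $H_B$. Each omitted $v_{2j}$ lies outside the triangle region $v_{2j-1}p_jv_{2j+1}$ side, beyond the chord $p_jv_{2j+1}$. This holds because $p_j$ is near the midpoint of $e_j$, so $v_{2j}$ lies outside the line $p_jv_{2j+1}$. This gives $2^a$ distinct $k$-holes.

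An alternative is to replace $v_{2j-1}$ instead of $v_{2j}$, which gives even more $k$-holes. This is fine, since we only need the lower bound $2^a$, but it must still be reconciled with the absence of $(k+1)$-holes, and the argument below handles that.

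\textbf{No $(k+1)$-hole.} This is the main obstacle. Let $Q$ be a convex subset of $X$ in convex position with at least $k+1$ points. Then $Q$ must contain some $p_j$, since the vertices of $P$ number only $k$. If $Q$ contains both $v_{2j-1}$ and $v_{2j}$ as well as $p_j$, we argue that $Q$ is not a hole.

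The geometry near $e_j$ is what drives this. The points $v_{2j-1},p_j,v_{2j}$ are nearly collinear, with $p_j$ slightly outside. Any further point $q\in Q$ lies on the inner side of $e_j$ at non-negligible distance. So the triangle $v_{2j-1}v_{2j}q$, which lies inside $\operatorname{conv}Q$, has $p_j$ on the opposite side. Convexity of $Q$ then forces $p_j$ to be a vertex between $v_{2j-1}$ and $v_{2j}$, which is consistent with $Q$ being convex.

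We therefore rethink the argument as a count instead. For each edge $e_j$, at most two of the three points $v_{2j-1},v_{2j},p_j$ can be used together with any point far from $e_j$. Suppose all three are used. Take $q$ to be a vertex of $P$ far from $e_j$. Then $v_{2j-1},p_j,v_{2j},q$ are in convex position only if $p_j$ is outside the segment, which it is. So instead we check emptiness: the quadrilateral $v_{2j-1}p_jv_{2j}q$ must contain no other point. This is the step to verify carefully.

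If this fails, we modify the construction so that $p_j$ is placed \emph{inside} $P$, close to $e_j$. Then any polygon using $v_{2j-1},v_{2j}$ together with a far point contains $p_j$. Hence each block $\{v_{2j-1},v_{2j},p_j\}$ contributes at most $2$ points to any hole. The vertices not in any chosen edge contribute $k-2a$ points. So every hole has at most $(k-2a)+2a=k$ points, which rules out $(k+1)$-holes. The $2^a$ holes are then obtained by choosing, for each $j$, either $\{v_{2j-1},v_{2j}\}$ or one of the pairs containing $p_j$, with convexity and emptiness checked locally as above. I would adopt this inside placement as the construction, since the block-counting argument is clean.
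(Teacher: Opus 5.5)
\textbf{Overall.} The construction you finally adopt, with each $p_j$ placed just \emph{inside} $P$ near the midpoint of $e_j$, is exactly the paper's. Your block-counting argument against $(k+1)$-holes is also the paper's argument. A set of more than $k$ points must contain a whole block $\{v_{2j-1},p_j,v_{2j}\}$ together with some $q$ outside it, and then $p_j\in\operatorname{int}\triangle v_{2j-1}v_{2j}q$.

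Two small points on that argument. First, this property is needed for \emph{every} $q\in X$ outside the block, including the neighbours $v_{2j-2},v_{2j+1}$ and the other $p_i$. It holds because strict convexity keeps all of these at positive distance from the line of $e_j$. Second, ``at most two points per block'' is only true for holes that also meet a point outside that block, which is all the counting needs.

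The outside placement you started with should be discarded outright, not kept as a fallback. There, $P\cup\{p_j\}$ is already a $(k+1)$-hole.

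\textbf{The gap: the family of $2^a$ holes.} For the inside placement you take, for each $j$, ``either $\{v_{2j-1},v_{2j}\}$ or one of the pairs containing $p_j$'', and you rely on the convexity and emptiness checks made for the outside placement. With $p_j$ inside $P$, the very property you just used shows that no hole can contain $v_{2j-1}$, $v_{2j}$ and a point outside the block. So the option $\{v_{2j-1},v_{2j}\}$ never yields a hole. Equivalently, your earlier emptiness check (``$e_i$ is still an edge of $H_B$, so $p_i$ lies outside'') is now reversed, since $p_i$ lies inside. Of your sets $H_B$, only $H_{[a]}$ is a hole.

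\textbf{The fix.} Use the paper's family, which always includes $p_j$ and drops exactly one endpoint. For each $j$ keep either $\{v_{2j-1},p_j\}$ or $\{p_j,v_{2j}\}$, and keep all vertices not on the chosen edges. These $2^a$ sets need their own verification, for instance by a limiting argument:
\begin{enumerate}
\item Let each $p_j$ tend to the midpoint $m_j$ of $e_j$. Each set becomes a set of boundary points of the strictly convex polygon $P$ with at most two points on any edge. Hence no three are collinear, and the set is in convex position.
\item Each dropped vertex lies outside the convex hull of its set: a supporting line of $P$ meeting $P$ only at that vertex has the whole set strictly on one side.
\item Both properties are open conditions, so they persist for $p_j$ close enough to $m_j$ (chosen generically).
\item Every $p_i$ is a vertex of each such set, so these sets are indeed $k$-holes.
\end{enumerate}
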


\begin{proof} 
Let \(v_1,\dots,v_k\) be the vertices of a convex \(k\)-gon, and choose the pairwise nonadjacent edges \(e_i=v_{2i-1}v_{2i}\), for \(i\in[a]\). For each \(i\), place a point \(u_i\) just inside the polygon and sufficiently close to the midpoint of \(e_i\). Choose these points sufficiently close and generically so that every set obtained by taking \(u_i\) together with exactly one endpoint of \(e_i\) for each \(i\) and keeping all untouched vertices is in convex position, with every unchosen endpoint outside its convex hull. Thus, every such choice gives a \(k\)-hole, and the \(2^a\) choices give \(2^a\) distinct \(k\)-holes. 

We may also choose the points sufficiently close to their edges that \(u_i\in\operatorname{int}\triangle v_{2i-1}v_{2i}x\) for every \(x\in X\setminus\{v_{2i-1},u_i,v_{2i}\}\). Now let \(Y\subseteq X\) have more than \(k\) points. If \(Y\) contained at most two points from each triple \(\{v_{2i-1},u_i,v_{2i}\}\), then \(|Y|\le k-2a+2a=k\). Hence \(Y\) contains one of these triples and also some point \(x\) outside it. But then \(u_i\) lies inside \(\triangle v_{2i-1}v_{2i}x\subseteq\operatorname{conv}(Y)\), so \(Y\) is not in convex position. Therefore \(X\) contains no \((k+1)\)-hole. 
\end{proof}

\begin{figure}[htbp]
\centering
\begin{tikzpicture}[scale=.95]
\def\R{2.2}
\def\nin{10}
\foreach \i in {1,...,10}{
  \pgfmathsetmacro\ang{90-360*(\i-1)/\nin}
  \coordinate (v\i) at ({\R*cos(\ang)},{\R*sin(\ang)});
}
\coordinate (u1) at (0.4,1.95);
\coordinate (u2) at (1.9,0.15);
\coordinate (u3) at (0.56,-1.90);

\draw[basepoly] (v1)--(v2)--(v3)--(v4)--(v5)--(v6)--(v7)--(v8)--(v9)--(v10)--cycle;
\draw[guide, line width=.9pt] (v1)--(v2) (v3)--(v4) (v5)--(v6);

\draw[hole] (u1)--(v2)--(v3)--(u2)--(u3)--(v6)--(v7)--(v8)--(v9)--(v10)--cycle;

\foreach \i in {2,3,6,7,8,9,10}{\node[chosen] at (v\i) {};}
\foreach \i in {1,4,5}{\node[omitted] at (v\i) {};}
\node[added] at (u1) {}; \node[added] at (u2) {}; \node[added] at (u3) {};

\end{tikzpicture}
\caption{The local lower-bound construction for $m_{k,k+1,k+a}\ge 2^a$.}
\label{fig:local-edge-replacement}
\end{figure}
We also record the following structural consequence of equality in \Cref{thrm1}. In any extremal configuration, once a
base \(k\)-hole \(K\) is fixed, each point outside \(K\) is paired with a distinct
point of \(K\), and every  \(k\)-hole is obtained by making swaps between the points.
\begin{proposition}\label{thm:structural-stability}
Let $X$ be a point set with $|X| = k+a$ without $(k+1)$-holes, where $k \ge 2a+2$ and $a \ge 2$. Assume $X$ contains exactly $2^a$ distinct $k$-holes. Let $K$ be a fixed $k$-hole and $A = X \setminus K$. Then there exists a set of $a$ distinct points $Y = \{y_x : x \in A\} \subseteq K$ such that the $2^a$ distinct $k$-holes of $X$ are exactly the sets $H_B$ for all subsets $B \subseteq A$, given by
$$
H_B = \left(K \setminus \{y_x : x \in B\}\right) \cup B.
$$
\end{proposition}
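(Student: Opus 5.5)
By \Cref{unique-subsets}, each subset \(B\subseteq A\) contributes at most one \(k\)-hole \(H\) with \(H\cap A=B\). Since \(a\le k/2-1\), every \(k\)-hole has exactly this form for \(B=H\cap A\). Equality \(2^a\) therefore forces, for every \(B\subseteq A\), the existence of a \emph{unique} \(k\)-hole \(H_B\) with \(H_B\cap A=B\). For each such hole put \(D_B:=K\setminus H_B\); then \(|D_B|=|B|\), because \(|H_B|=k\) and \(H_B\setminus B\subseteq K\). The plan is to show that \(D_B=\bigcup_{x\in B}D_{\{x\}}\) and that the singletons \(D_{\{x\}}=\{y_x\}\) are pairwise distinct. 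Together these give \(|D_B|=|B|\) and exactly the stated form of \(H_B\).

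\textbf{Step 1 (monotonicity).} Take \(B\subseteq B'\). We want \(D_B\subseteq D_{B'}\), that is, \(H_{B'}\cap K\subseteq H_B\cap K\). Apply \Cref{lem:three-holes-union} to \(H_B\), \(H_{B'}\) and \(M:=(H_B\cup H_{B'})\cap K\). The set \(M\) is a hole because it is a subset of \(K\).
\begin{itemize}
\item Every point lies in at least two of the three sets. Points of \(B\) lie in both \(H_B\) and \(H_{B'}\), and points of \(K\) in the union lie in \(M\) and in one of the two holes. The exception is the points of \(B'\setminus B\), which lie only in \(H_{B'}\).
\item So the lemma does not apply directly, and I would instead work with \(H_B\), \(H_{B'}\) and \(M'=M\cup(B'\setminus B)\). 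I have to check whether \(M'\) is a hole, and this is not automatic.
\end{itemize}
A cleaner route is to compare \(H_B\) with \(H_{B'}\) in the same way the proof of \Cref{unique-subsets} compares two holes. Set \(C=(H_B\cap K)\cap(H_{B'}\cap K)\). Then \(|C|\ge k-|B|-|B'|\ge 2\), but only when \(|B|+|B'|\le k-2\), which holds since both are at most \(a\). I would then look for a third hole that lets the union lemma produce a hole of size greater than \(k\) whenever \(D_B\not\subseteq D_{B'}\).

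This step is the main obstacle. What I would try first is an induction on \(|B'\setminus B|\), adding one point at a time, so that each application of \Cref{lem:three-holes-union} is to \(H_B\), \(H_{B\cup\{x\}}\) and \(K\)-subsets. Any point of \(K\) that is present in \(H_{B\cup\{x\}}\) but missing from \(H_B\) should, by convexity and emptiness, produce a \((k+1)\)-hole containing \(H_B\).

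\textbf{Step 2 (singletons and additivity).} With monotonicity in hand, each \(D_{\{x\}}\) is a single point \(y_x\), and \(D_{\{x\}}\subseteq D_B\) for every \(B\ni x\). Hence
\[
\bigcup_{x\in B}\{y_x\}\subseteq D_B .
\]

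\textbf{Step 3 (distinctness).} Suppose \(y_x=y_{x'}\) for some \(x\neq x'\). Then \(D_{\{x,x'\}}\) has two elements and contains \(y_x\), so it has one further point \(z\). I would rule this out by applying \Cref{lem:three-holes-union} to \(H_{\{x\}}\), \(H_{\{x'\}}\) and \(H_{\{x,x'\}}\). Each point of the union lies in at least two of these sets, as one checks from \(D_{\{x,x'\}}=\{y_x,z\}\), and their common part is \(K\setminus\{y_x,z\}\), which has size at least \(2\). The lemma then makes their union a hole. That union contains \(K\setminus\{y_x\}\cup\{x,x'\}\), which has size \(k+1\), a contradiction.

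With distinctness established, \(\bigl|\bigcup_{x\in B}\{y_x\}\bigr|=|B|=|D_B|\), so the inclusion from Step 2 is an equality. This gives \(H_B=(K\setminus\{y_x:x\in B\})\cup B\). The hypothesis \(a\ge2\) enters only so that the pairs in Step 3 exist.
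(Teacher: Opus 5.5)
Your proposal has a genuine gap in Step 1. You identify monotonicity ($D_B\subseteq D_{B'}$ for $B\subseteq B'$) as the main obstacle, but you never prove it. Steps 2 and 3 both depend on it. In particular, Step 3 needs $y_x\in D_{\{x,x'\}}$, which is exactly $D_{\{x\}}\subseteq D_{\{x,x'\}}$. Without it, $y_x$ could lie only in $H_{\{x,x'\}}$, and \Cref{lem:three-holes-union} would no longer apply.

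Neither of your suggested repairs works as stated. Enlarging $M$ to $M\cup(B'\setminus B)$ requires an emptiness claim you have no means to verify. The induction on $|B'\setminus B|$ hits the same obstruction already when $|B'\setminus B|=1$: the new point $x$ still lies in only one of $H_B$, $H_{B\cup\{x\}}$ and any subset of $K$.

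The missing idea is to shrink instead of enlarge. A subset of a hole is again a hole, so replace $H_{B'}$ by $B\cup(K\setminus D_{B'})\subseteq H_{B'}$, which discards the troublesome points of $B'\setminus B$. Now apply \Cref{lem:three-holes-union} to three sets: $H_B=B\cup(K\setminus D_B)$, $B\cup(K\setminus D_{B'})$, and your $M=K\setminus(D_B\cap D_{B'})$.
\begin{itemize}
\item Points of $B$ lie in the first two sets.
\item Points of $D_{B'}\setminus D_B$ lie in the first and third.
\item Points of $D_B\setminus D_{B'}$ lie in the second and third.
\item The common part $K\setminus(D_B\cup D_{B'})$ has at least $k-2a\ge 2$ points.
\end{itemize}
Hence $B\cup(K\setminus(D_B\cap D_{B'}))$ is a hole, so it has at most $k$ points. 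This gives $|D_B\cap D_{B'}|\ge |B|=|D_B|$, that is, $D_B\subseteq D_{B'}$. This is precisely the paper's first step.

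With monotonicity in place, the rest of your argument is correct. Your Step 3 is the singleton case $C=\{x\}$, $C'=\{x'\}$ of the paper's second step. That step shows $D_{C\cup C'}=D_C\cup D_{C'}$ with $D_C\cap D_{C'}=\emptyset$ for all disjoint $C,C'$, and then inducts on $|B|$. Your cardinality comparison $\bigl|\bigcup_{x\in B}\{y_x\}\bigr|=|B|=|D_B|$ is a valid and slightly shorter replacement for that induction.
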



\begin{proof}

By \Cref{unique-subsets} and the maximality of the configuration, for each $B\subseteq A$ there is exactly one $k$-hole $H_B$ with $H_B\cap A=B$. Define $G_B:= K\setminus H_B$, the replacing points for $B$. So we wish to prove that $G_B = \{y_x:x\in B\}$. We divide the proof in two steps: 

Firstly, we observe that for $B'\subseteq B$ we have $G_{B'} \subseteq G_B$. Indeed, consider the three holes
$$
    K \setminus (G_B\cap G_{B'}) \subseteq K
$$
$$
    B'\cup(K \setminus G_B) \subseteq H_B 
$$
$$
    B'\cup(K \setminus G_{B'}) = H_{B'}  
$$
for which the conditions of the \cref{lem:three-holes-union} are readily verified. Indeed, points in $B'$ appear in two holes, $K\setminus (G_B\cup G_{B'})$ (which is by assumption $k \geq 2a + 2$ of size at least $k-2a = 2$) in three, and the points in $G_B\setminus G_{B'}, G_{B'}\setminus G_B$ in two of them. Their union $B'\cup (K \setminus (G_B\cap G_{B'}))$ is therefore a hole. This forces $|B'|+|K \setminus (G_B\cap G_{B'})|\leq k$ or $|G_B\cap G_{B'}|\geq |B'| = |G_{B'}|$ which infers the first conclusion.

Secondly, let $C\cup C'= B,\: C\cap C' = \emptyset$.  By the first step, $G_C\cup G_{C'}\subseteq G_B$, we show that $G_C\cup G_{C'}\subseteq G_B$. Consider the three holes 
$$
B\cup (K\setminus G_B) \subseteq H_B
$$
$$
C \cup (K\setminus (G_C \cup G_{C'})) \subseteq H_C
$$
$$
C' \cup (K\setminus (G_C \cup G_{C'})) \subseteq H_{C'}
$$
again clearly fulfilling the conditions of the \cref{lem:three-holes-union}. Here the points in $C\cup C' = B$ appear in two holes, $K \setminus G_B$ in three and $G_B\setminus(G_C \cup G_{C'})$ in two of them. Now their union $B \cup (K\setminus(G_C\cup G_{C'}))$ being a hole forces $|B|+|K\setminus(G_C\cup G_{C'})|\leq k$, then $|G_C\cup G_{C'}|\ge |B|=|G_C|+|G_{C'}|$. Hence $G_C\cap G_{C'}=\emptyset$. Since
$G_C\cup G_{C'}\subseteq G_B$ and
$|G_C\cup G_{C'}|=|B|=|G_B|$, we conclude that
$G_B=G_C\mathbin{\cup}G_{C'}$. Now the claim $G_B = \{y_x:x\in B\}$ is readily verified by induction on $|B|$, the ground case $|B|=1$ being trivial.
\end{proof}

\section{Proportional $n=\alpha k$ regime}\label{sec:proportional}
\subsection*{Upper bound proof}
For \(n\ge k+\lfloor k/2\rfloor\), we are going to prove the following upper bound.
\begin{theorem}\label{thm:three-halves}
Let \(n\ge k+\lfloor k/2\rfloor\). Then
$$
m_{k,k+1,n}
\le
\frac{5\cdot 3^{\lfloor k/2\rfloor}-3}{2}
\frac{\binom{n}{k}}{\binom{k+\lfloor k/2\rfloor}{k}} .
$$
\end{theorem}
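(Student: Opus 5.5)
Set \(m:=k+\lfloor k/2\rfloor\). The plan is a standard averaging (supersaturation) argument. First bound the number of \(k\)-holes inside any \(m\)-point subset \(Y\subseteq X\) by the constant \(c_k:=\frac{5\cdot 3^{\lfloor k/2\rfloor}-3}{2}\). Then double count pairs \((H,Y)\) with \(H\) a \(k\)-hole of \(X\), \(Y\supseteq H\) and \(|Y|=m\). Every \(k\)-hole of \(X\) is a \(k\)-hole of any \(Y\) containing it, and \(Y\) inherits the property of having no \((k+1)\)-hole. Each \(H\) lies in exactly \(\binom{n-k}{m-k}\) such sets \(Y\), so
\[
M_k(X)\binom{n-k}{m-k}\le c_k\binom{n}{m}.
\]
Since \(\binom{n}{m}/\binom{n-k}{m-k}=\binom{n}{k}/\binom{m}{k}\), this is exactly the claimed bound. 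The whole proof therefore reduces to the local statement: an \(m\)-point set with no \((k+1)\)-hole has at most \(c_k\) holes of size \(k\).

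For the local statement I would copy the strategy of \Cref{unique-subsets}. Fix a \(k\)-hole \(K\) of \(Y\) (if none exists we are done), and let \(A=Y\setminus K\), so \(|A|=\lfloor k/2\rfloor=:a\). Classify every other \(k\)-hole \(H\) by its trace \(B=H\cap A\). The proof of \Cref{unique-subsets} applies verbatim whenever \(k-2|B|\ge 2\), that is, \(|B|\le a-1\) (when \(k\) is even; the odd case has one more slack and should be analogous). For such \(B\) there is at most one \(k\)-hole with trace \(B\). This gives at most \(2^a\) holes from small traces, which is far below \(c_k\).

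The difficulty is concentrated in traces with \(|B|=a\), and when \(k\) is odd possibly also \(|B|=a-1\) borderline cases. Here two \(k\)-holes \(H_1,H_2\) with the same trace satisfy only \(|C_1\cap C_2|\ge k-2a\in\{0,1\}\), so \Cref{lem:three-holes-union} cannot be applied directly. The form of \(c_k\) guides the plan: \(c_k=1+\frac{3}{2}(3^a-1)+\dots\) looks like a sum \(\sum 3^{\,\cdot}\) over traces, together with a correction. So I would try to show that, for a fixed trace \(B\), the family of \(K\)-parts \(C=H\cap K\) has at most \(3^{|B|}\)-type growth. The idea is to fix one hole \(H_0\) with trace \(B\) and iterate the argument with \(H_0\) in place of \(K\). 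Relative to \(H_0\), a new hole uses fewer outside points, and at least two common points exist unless the configuration is extremely rigid.

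Concretely, I would define an auxiliary hypergraph on \(K\) whose edges are the sets \(K\setminus C\). I would then use \Cref{lem:three-holes-union} to forbid certain patterns, namely any three edges whose union condition and common intersection of at least \(2\) would produce a \((k+1)\)-hole. Counting edges in such a pattern-free hypergraph should give a recursion of the form \(f(a)\le 3f(a-1)+O(1)\), with solution \(\frac{5\cdot 3^a-3}{2}\). The main obstacle is this hypergraph counting step for maximal traces, where the intersection hypothesis of \Cref{lem:three-holes-union} barely fails. I would first attempt to handle it by choosing the reference hole adaptively, taking a hole maximizing its overlap with \(K\), so that condition (2) can be recovered.
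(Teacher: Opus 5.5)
\textbf{There is a genuine gap.} The local bound, which carries all the content, is never proved.

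Your trace argument correctly gives at most one $k$-hole for each trace $B\subsetneq A$, because $k-2|B|\ge k-2\lfloor k/2\rfloor+2\ge 2$ in both parities. So nothing is borderline at $|B|=a-1$, and odd $k$ buys no slack at $|B|=a$, where $k-2a=1$. The only remaining trace is $B=A$ itself. For it you offer only a recursion $f(a)\le 3f(a-1)+O(1)$ read off from the shape of the constant, with no mechanism that produces it; ``choosing the reference hole adaptively'' is not an argument.

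The reduction also contains a false step. A subset $Y\subseteq X$ does \emph{not} inherit the absence of $(k+1)$-holes, since $k+1$ points of $Y$ in convex position may have points of $X\setminus Y$ in their convex hull. What you must bound is the number of $k$-holes \emph{of $X$} contained in $Y$; this is exactly what the paper's induced hypergraph $F[T]$ records. Because \Cref{lem:three-holes-union} turns holes of $X$ into a hole of $X$, your trace argument survives this rephrasing.

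The paper gets the factor $3$ from a different lemma, \Cref{lem:two-thirds-union}. Among any $m$ holes, the points lying in more than $2m/3+1$ of them form a hole, so there are at most $k$ such points. Hence a $(k+r)$-set $T$ containing $m$ $k$-holes of $X$ has a point $v$ lying in at most $2m/3+1$ of them, and $T\setminus\{v\}$ still contains at least $m/3-1$ of them. Induction on $r$ gives $m\le 3c_{r-1}+3$ with $c_0=1$, i.e.\ $c_r=(5\cdot 3^r-3)/2$. No base hole $K$ and no traces are involved.

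Alternatively, your own framework closes in a few lines. Suppose $A\cup C_1\neq A\cup C_2$ are $k$-holes with $C_1,C_2\subseteq K$ and $|C_1\cap C_2|\ge 2$. Then \Cref{lem:three-holes-union}, applied to $A\cup C_1$, $A\cup C_2$, $C_1\cup C_2$, yields a hole $A\cup C_1\cup C_2$ with more than $k$ points. So the sets $C$, each of size $\lceil k/2\rceil$, pairwise share at most one point. Counting pairs of points of $K$ shows there are at most $\binom{k}{2}/\binom{\lceil k/2\rceil}{2}\le 6$ of them. The local count is then at most $2^{a}-1+\binom{k}{2}/\binom{\lceil k/2\rceil}{2}$, which is at most your $c_k$ for every $k\ge 3$, and far below it for large $k$. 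The failure of the intersection hypothesis at $B=A$ is a restriction to exploit, not an obstacle to route around by recursion.
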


We will again start with a proof of a lemma similar to \Cref{lem:three-holes-union}.
\begin{lemma}\label{lem:two-thirds-union}
Let \(\mathcal C=\{H_1,\dots,H_m\}\) be a collection of \(m\) distinct holes in a
point set \(X\). Let \(S\subseteq \bigcup_{i=1}^m H_i\) be such that every point of
\(S\) belongs to more than \(2m/3+1\) holes in \(\mathcal C\). Then \(S\) is a hole.
\end{lemma}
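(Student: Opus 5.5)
We follow the same strategy as in \Cref{lem:three-holes-union}. It suffices to show that every triangle spanned by three points of \(S\) is empty. A set in which every triangle is empty is automatically in convex position with empty hull, hence a hole. So fix distinct \(x,y,z\in S\) and, for each point \(s\), let \(\mathcal C_s\subseteq\mathcal C\) be the family of holes of \(\mathcal C\) containing \(s\). By hypothesis \(|\mathcal C_s|>2m/3+1\) for every \(s\in S\).

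The first step is a counting argument that produces a common hole of all three points. By inclusion--exclusion,
$$
|\mathcal C_x\cap\mathcal C_y\cap\mathcal C_z|\ \ge\ |\mathcal C_x|+|\mathcal C_y|+|\mathcal C_z|-2m\ >\ 3\Bigl(\tfrac{2m}{3}+1\Bigr)-2m\ =\ 3 .
$$
Hence there are at least four (in particular at least one) holes \(H\in\mathcal C\) with \(x,y,z\in H\). Since \(H\) is a hole, \(\triangle xyz\) contains no point of \(X\) in its interior.

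This already finishes the proof. The triangle \(\triangle xyz\) is empty for every triple in \(S\), so no point of \(X\) (in particular no point of \(S\)) lies inside \(\operatorname{conv}\) of three points of \(S\). By Carathéodory's theorem, \(S\) is in convex position and \(\operatorname{conv}(S)\) contains no further point of \(X\). Thus \(S\) is a hole.

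The only step needing care is this final reduction from ``all triangles empty'' to ``hole''. By general position, a point of \(X\) inside \(\operatorname{conv}(S)\) lies in the interior of some triangle on vertices of \(S\), via a triangulation of \(\operatorname{conv}(S)\) using vertices of \(S\). This is exactly the reduction used at the start of the proof of \Cref{lem:three-holes-union}, so we may quote it.

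The threshold \(2m/3+1\) is clearly stronger than needed for a single common hole; \(2m/3\) would already give \(|\mathcal C_x\cap\mathcal C_y\cap\mathcal C_z|>0\). This suggests the authors' proof may instead route through \Cref{lem:three-holes-union}, using the extra slack to guarantee two common points. Still, the direct pigeonhole argument above appears to suffice, and we would present it as the proof.
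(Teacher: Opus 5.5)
Your proof is correct and is essentially the paper's argument: the paper bounds the number of holes of $\mathcal C$ missing at least one of $x,y,z$ by $3(m/3-1)<m$, which is exactly your inclusion--exclusion estimate written for complements, and then concludes that $\triangle xyz$ is empty. Your closing guess is off but harmless: the paper does not route this lemma through \Cref{lem:three-holes-union}, and you are right that the extra $+1$ in the threshold is not needed here.
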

\begin{proof}
It suffices to show that every triangle spanned by three points of \(S\) is empty.
For each point among \(x,y,z\in S\), the number of holes in \(\mathcal C\) not containing it is at most $m/3-1$. Their union, which is the set of holes that contain at most two points of \(x,y,z\), is therefore of size at most $3(m/3-1)<m$. So there is a hole that contains \(x,y,z\) and therefore \(\triangle xyz\) is empty.

\end{proof}

Now, to show \Cref{thm:three-halves}, it suffices to establish the following proposition.

\begin{proposition}\label{prop:hypergraph-reduction}
 Let $n \ge k$, and let $F=(V,E)$ be a $k$-uniform hypergraph with $|V(F)|=n$. If $F$ satisfies that for every subhypergraph $H \subseteq F$ with $|E(H)| = m$, the number of vertices with a degree strictly greater than $2m/3 + 1$ is at most $k$, then for every integer $r$ such that $0 \le r \le n-k$, we have
$$|E({F})| \le \frac{5 \cdot 3^r - 3}{2}\frac{\binom{n}{k}}{\binom{k+r}{k}}.$$
\end{proposition}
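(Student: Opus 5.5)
Fix a set $W\subseteq V$ with $|W|=k+r$ and look at the induced subhypergraph $F[W]$, consisting of the edges of $F$ contained in $W$. The plan is to show that every such $F[W]$ has at at most $g(r):=\frac{5\cdot 3^r-3}{2}$ edges. Then we double count pairs $(e,W)$ with $e\in E(F)$ and $e\subseteq W$. Each edge lies in exactly $\binom{n-k}{r}$ sets $W$, so
$$|E(F)|\binom{n-k}{r}\le g(r)\binom{n}{k+r}.$$
Since $\binom{n}{k+r}/\binom{n-k}{r}=\binom{n}{k}/\binom{k+r}{k}$, this gives exactly the claimed bound. The whole difficulty is therefore the local statement. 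It depends only on the hypothesis restricted to subhypergraphs of $F[W]$, and that restriction is inherited.

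For the local bound I would prove, by induction on $r$, that a $k$-uniform hypergraph $F$ on $k+r$ vertices satisfying the hypothesis has at most $g(r)$ edges. Note that $g(0)=1$, $g(1)=6$, and $g(r)=3g(r-1)+3$.

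The case $r=0$ is trivial. For $r=1$ it suffices that $6\ge k+1$ for the relevant $k$; if $k$ is large we instead check directly that three edges already violate the hypothesis. Indeed, with $m=3$, any vertex in at least $3>2m/3+1=3$ edges... is not quite enough, so the threshold arithmetic has to be handled carefully. In that case I would aim for a weaker but sufficient base case, namely at most $6$ edges on $k+1$ vertices.

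For the inductive step, pick a vertex $v$. The edges avoiding $v$ form a hypergraph on $k+r-1$ vertices that still satisfies the hypothesis, so there are at most $g(r-1)$ of them. Each edge containing $v$ misses exactly $r$ of the other $k+r-1$ vertices. The goal is to show that the edges containing $v$ number at most $2g(r-1)+3$. I would obtain this by choosing $v$ of minimum degree, or by averaging over $v$: summing the degrees gives $k|E|=\sum_v \deg v$, and the ``avoid'' counts sum to $r|E|$.

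An averaging version looks cleaner. Apply the hypothesis to $H=F$ itself with $m=|E|$. At most $k$ vertices have degree greater than $2m/3+1$, so at least $r$ vertices have degree at most $2m/3+1$. Deleting such a vertex $v$ leaves $m-\deg v\ge m/3-1$ edges avoiding $v$, and these are bounded by $g(r-1)$ by induction. Hence
$$m\le 3g(r-1)+3=g(r),$$
which is precisely the recursion. So the induction is: take a low-degree vertex guaranteed by the hypothesis (since $k+r>k$ vertices cannot all have high degree), remove it, and recurse. The base case $r=0$ gives $g(0)=1$, and the recursion $g(r)=3g(r-1)+3$ solves to $\frac{5\cdot3^r-3}{2}$, consistent with the formula.

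The main point to verify is that the hypothesis passes to the subhypergraph of edges avoiding $v$, viewed on the vertex set $V\setminus\{v\}$. This holds, since every subhypergraph of it is a subhypergraph of $F$. One must also check the case $m$ small, where $2m/3+1$ could make "degree $\le 2m/3+1$" trivially true; that case causes no harm. After that, the double counting step above finishes the proof.
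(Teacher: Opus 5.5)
Your proposal is correct and takes essentially the same route as the paper. You prove by induction on $r$ that every $(k+r)$-set spans at most $\frac{5\cdot 3^r-3}{2}$ edges, by deleting a vertex of degree at most $2m/3+1$ (one exists because $k+r>k$), and then double count pairs $(e,W)$; the separate discussion of an $r=1$ base case is unnecessary, since the inductive step from $r=0$ already gives $g(1)=6$.
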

\Cref{thm:three-halves} is then immediate for \(r=\lfloor k/2\rfloor\) when one considers the hyperedges in the hypergraph to be the \(k\)-holes in the point set and directly applies \Cref{lem:two-thirds-union}.
\begin{proof}
For $r\ge0$, set
$$
c_r:=\frac{5\cdot 3^r-3}{2}.
$$
We prove by induction on $r$ that every set $T\subseteq V(F)$
with $|T|=k+r$ spans at most $c_r$ edges.
For $r=0$, the set $T$ has size $k$, so it spans at most one edge, and
$c_0=1$. Now let $r\ge1$, and put $m:=|E(F[T])|$. By the assumption applied to
the induced subhypergraph $F[T]$, at most $k$ vertices of $T$ have degree
greater than $2m/3+1$. Since $|T|=k+r>k$, there is a vertex $v\in T$
with
$$
\deg_{F[T]}(v)\le \frac{2m}{3}+1.
$$
It follows that induced hypergraph
$$
|E(F[T\setminus{v}])|
=m-\deg_{F[T]}(v)\ge \frac{m}{3}-1.
$$
By the induction hypothesis, the left-hand side is at most
$c_{r-1}$. Hence $m\le 3c_{r-1}+3=c_r$, proving the claim. We now double-count pairs $(e,T)$ such that $e\in E(F)$,
$T\in\binom{V(F)}{k+r}$, and $e\subseteq T$. Every edge $e$ is
contained in exactly $\binom{n-k}{r}$ such sets $T$, while every $T$
contains at most $c_r$ edges. Therefore
$$
|E(F)|\binom{n-k}{r}
\le c_r\binom{n}{k+r}.
$$
Finally,
$
\frac{\binom{n}{k+r}}{\binom{n-k}{r}}
=\frac{\binom{n}{k}}{\binom{k+r}{k}},
$
implies that
$$
|E(F)|
\le
\frac{5\cdot 3^r-3}{2}
\frac{\binom{n}{k}}{\binom{k+r}{k}}.
$$
\end{proof}
\subsection*{Lower bound chain construction}
We finish the section with a lower-bound construction. The construction below is tailored to the regime
$n<k^{2}/2$, and yields a product-type family of $k$-holes while still avoiding $(k+1)$-holes.

\begin{proposition}\label{prop:chain_construction}
 Let $k$ be even, let $k+1 \leq n < k^{2}/2$, and assume that $t:=2n/k$ is an
integer. Then $$m_{k,k+1,n}\ge (t-1)^{k/2} = (2n/k-1)^{k/2}.$$
\end{proposition}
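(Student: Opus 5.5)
The plan is a product construction: $k/2$ disjoint flat chains of $t$ points each, so that $|X|=tk/2=n$. We will show that every choice of two \emph{consecutive} points from each chain gives a $k$-hole, while no set in convex position can use three points of one chain. Since each chain has $t-1$ consecutive pairs, this yields $(t-1)^{k/2}$ distinct $k$-holes and no $(k+1)$-hole.

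\emph{Construction.} Set $r:=k/2$ and let $v_1,\dots,v_r$ be the vertices of a regular $r$-gon centred at the origin; when $r=3$ the exterior angles are still $2\pi/3$, which is all we use. Near each $v_j$ we place a chain $P_j=(p_{j,1},\dots,p_{j,t})$ in a tiny neighbourhood of size $\varepsilon$. Its points lie, in order, along a very flat arc that is tangent at $v_j$ to the supporting line perpendicular to $Ov_j$. The arc curves slightly \emph{toward} the centre: its centre of curvature lies on the far side of the tangent line from the polygon. Its maximal deviation from the tangent line is $\delta\ll\varepsilon$. Finally we perturb generically to obtain general position.

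\emph{No $(k+1)$-hole.} Take three points $p_{j,a},p_{j,b},p_{j,c}$ with $a<b<c$ from one chain and any point $x$ of another chain. Because the arc bends inward, $p_{j,b}$ lies strictly on the inner side of the chord $p_{j,a}p_{j,c}$. The point $x$ lies at distance of order $1$ inward, while the chord has length at most $\varepsilon$. Hence $p_{j,b}\in\operatorname{int}\triangle p_{j,a}p_{j,c}x$ once $\delta/\varepsilon$ and $\varepsilon$ are small. Therefore a set in convex position meets each chain in at most two points, unless it lies entirely within one chain. A convex subset of a single chain has at most $2$ points by the same argument, and $2<k+1$ anyway. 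So every set in convex position has at most $2r=k$ points, and $X$ has no $(k+1)$-hole.

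\emph{Many $k$-holes.} Fix, for each $j$, a consecutive pair $\{p_{j,i_j},p_{j,i_j+1}\}$, and let $H$ be the union of these pairs. Convexity of $H$ holds because, up to $O(\varepsilon)$, $H$ is a regular $r$-gon with each vertex replaced by a tiny segment in the tangent direction. Such a $2r$-gon is convex: the turning angle at each segment endpoint is about $\pi/r$, which dominates the $O(\delta/\varepsilon)$ tilt of the chords. For emptiness, no point of another chain can lie in $\operatorname{conv}(H)$, since every other chain sits near a vertex and is cut off by the tiny edges near it. Within the chain $P_j$, the points strictly between $p_{j,i_j}$ and $p_{j,i_j+1}$ do not exist, because the pair is consecutive. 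The remaining points of $P_j$ lie beyond an endpoint along the tangent direction, displaced inward by at most $\delta$. The hull edge leaving that endpoint toward the neighbouring chain turns inward by an angle about $\pi/r$, so these points lie outside $\operatorname{conv}(H)$ provided $\delta\ll\varepsilon/r$. Thus each of the $(t-1)^{r}$ choices gives a distinct $k$-hole, which proves the bound.

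\emph{Main obstacle.} The delicate step is choosing the scales $\delta\ll\varepsilon\ll 1$ compatibly, depending on $r$ and $t$. The inward curvature must be strong enough, relative to general position, that the middle point of any triple is swallowed by a triangle with a far point. Yet it must be weak enough relative to the angle $\pi/r$ that consecutive pairs remain extreme and the outer points of a chain fall outside the hull. I would first verify both inequalities quantitatively for a parabolic arc $y=-\delta(s/\varepsilon)^2$ in local coordinates and then fix the parameters. The hypothesis $n<k^2/2$, i.e.\ $t<k$, only records the intended regime and is not needed for these inequalities; for larger $t$ the same construction still works, but it is no longer the relevant bound.
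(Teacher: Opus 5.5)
Your construction is essentially the paper's: the paper replaces every other edge of a regular $k$-gon by a flat inward-bending chain of $t$ points, which is the same configuration as your truncated regular $(k/2)$-gon, and your two geometric facts match its two flatness properties. However, one step in your exclusion of $(k+1)$-holes is false. You claim that a convex subset of a single chain has at most $2$ points ``by the same argument''. That argument needs a point $x$ of a \emph{different} chain, so that $\triangle p_{j,a}p_{j,c}x$ swallows the middle point. Within one chain there is no such point, and any three points in general position are in convex position anyway.

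In fact all $t$ points of $P_j$ lie on a strictly convex arc, and $\operatorname{conv}(P_j)$ lies in the $\varepsilon$-neighbourhood of $v_j$, which contains no point of $X\setminus P_j$. So $P_j$ is itself a $t$-hole, and so is each of its subsets. Consequently your closing remark is also wrong: for $t\ge k+1$, any $k+1$ points of one chain form a $(k+1)$-hole, and the construction fails.

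This is precisely where the hypothesis $n<k^2/2$, i.e.\ $t=2n/k<k$, enters. The repair is to replace the faulty sentence by ``a hole contained in a single chain has at most $t\le k$ vertices'', which is how the paper concludes. With that change your argument is complete.

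A minor simplification is also available. With the bending you chose, intermediate points lie inward of each chord. Hence the unchosen points of $P_j$ lie strictly on the \emph{outer} side of the line through the chosen consecutive pair, and that line supports $\operatorname{conv}(H)$. This is the paper's first flatness property, and it gives emptiness directly, without comparing displacements with the turning angle $\pi/r$.
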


Without the technical assumption of $t$ being an integer, one can recover $m_{k,k+1,n}\ge (t-2)^{k/2} = (2n/k-2)^{k/2}$ merely by modifying the following construction.

\begin{proof}
Let \(v_1,\dots,v_k\) be the vertices of a regular \(k\)-gon in cyclic
order, and for \(i\in[k/2]\) let \(e_i=v_{2i-1}v_{2i}\). Replace each
\(e_i\) by a sufficiently flat, strictly concave chain
\(C_i=(p_{i,1},\dots,p_{i,t})\), where
\(p_{i,1}=v_{2i-1}\) and \(p_{i,t}=v_{2i}\).

We choose the chains sufficiently close to the original edges that the
following properties hold. First, the line through
any two consecutive points of \(C_i\) separates the remaining points of
\(C_i\) from all the other chains. Second, whenever \(a,b,c\in C_i\)
occur in this order and \(q\) lies on another chain, we have
\(b\in\operatorname{int}\triangle acq\). These properties hold simultaneously after taking the finitely many chains sufficiently flat. 

Put \(X:=C_1\cup\cdots\cup C_{k/2}\), so that
\(|X|=(k/2)t=n\). For every
\(\mathbf j=(j_1,\dots,j_{k/2})\in\{1,\dots,t-1\}^{k/2}\), let
$
H_{\mathbf j}:=
\bigcup_{i=1}^{k/2}\{p_{i,j_i},p_{i,j_i+1}\}.
$
By the first  property, each selected consecutive pair forms
an exposed edge of \(\operatorname{conv}(H_{\mathbf j})\), while every
omitted point of that chain lies outside this convex hull. Hence
\(H_{\mathbf j}\) is a \(k\)-hole. Different vectors \(\mathbf j\) produce different holes, so \(X\) contains at least \((t-1)^{k/2}\) distinct \(k\)-holes.

It remains to exclude \((k+1)\)-holes. Let \(H\) be a hole meeting at least two chains, and choose \(q\in H\setminus C_i\). If \(H\cap C_i\)
contained three points \(a,b,c\) in this order, then
 \(b\in\operatorname{int}\triangle acq\), contradicting the convexity of \(H\). Thus \(|H\cap C_i|\le2\).
Consequently, every hole meeting
more than one chain has at most two points from each of the \(k/2\) chains, and hence at most \(k\) vertices. A hole contained in one chain has at most \(t\leq k\) vertices because
\(n\leq k^2/2\). Therefore \(X\) contains no \((k+1)\)-hole~and
$$
m_{k,k+1,n}\ge M_k(X)\ge(t-1)^{k/2}
=(2n/k-1)^{k/2}.
$$
\end{proof}
\begin{figure}[htbp]
\centering
\begin{tikzpicture}[scale=.95]
\coordinate (v1) at (0.00,2.35);
\coordinate (v2) at (1.66,1.66);
\coordinate (v3) at (2.35,0.00);
\coordinate (v4) at (1.66,-1.66);
\coordinate (v5) at (0.00,-2.35);
\coordinate (v6) at (-1.66,-1.66);
\coordinate (v7) at (-2.35,0.00);
\coordinate (v8) at (-1.66,1.66);
\draw[basepoly] (v1)--(v2)--(v3)--(v4)--(v5)--(v6)--(v7)--(v8)--cycle;

\coordinate (c1-0) at (0.00,2.35); \coordinate (c1-1) at (0.38,2.12); \coordinate (c1-2) at (0.75,1.91); \coordinate (c1-3) at (1.10,1.75); \coordinate (c1-4) at (1.40,1.66); \coordinate (c1-5) at (1.66,1.66);
\coordinate (c2-0) at (2.35,0.00); \coordinate (c2-1) at (2.12,-0.38); \coordinate (c2-2) at (1.91,-0.75); \coordinate (c2-3) at (1.75,-1.10); \coordinate (c2-4) at (1.66,-1.40); \coordinate (c2-5) at (1.66,-1.66);
\coordinate (c3-0) at (0.00,-2.35); \coordinate (c3-1) at (-0.38,-2.12); \coordinate (c3-2) at (-0.75,-1.91); \coordinate (c3-3) at (-1.10,-1.75); \coordinate (c3-4) at (-1.40,-1.66); \coordinate (c3-5) at (-1.66,-1.66);
\coordinate (c4-0) at (-2.35,0.00); \coordinate (c4-1) at (-2.12,0.38); \coordinate (c4-2) at (-1.91,0.75); \coordinate (c4-3) at (-1.75,1.10); \coordinate (c4-4) at (-1.66,1.40); \coordinate (c4-5) at (-1.66,1.66);

\foreach \i in {1,...,4}{
  \draw[chain] (c\i-0)--(c\i-1)--(c\i-2)--(c\i-3)--(c\i-4)--(c\i-5);
  \foreach \j in {0,...,5}{\node[blockpt] at (c\i-\j) {};}
}

\draw[hole] (c1-1)--(c1-2)--(c2-3)--(c2-4)--(c3-2)--(c3-3)--(c4-0)--(c4-1)--cycle;
\foreach \p in {c1-1,c1-2,c2-3,c2-4,c3-2,c3-3,c4-0,c4-1}{\node[chosen] at (\p) {};}

\node[font=\small, blue!70!black] at (2.65,.95) {$H_{\mathbf j}$};
\end{tikzpicture}
\caption{The described chain construction.}
\label{fig:chain-lower-bound}
\end{figure}
\section{Regime \texorpdfstring{$k$}{k}-fixed and \texorpdfstring{$n\rightarrow\infty$}{n to infinity} }
\subsection*{Upper bound proof}
For the sake of simplicity, let $k=2r \ge 6$, and denote our set of $n$ points as $X$. Assuming $X$ contains no empty convex $(2r+1)$-gons, we bound the number of empty convex $2r$-gons by double-counting pairs $(Q,K)$, where $K$ is an empty convex $2r$-gon and $Q$ is a specific type of sampled empty convex $r$-gon. For an empty convex $2r$-gon $K=(v_1,\dots,v_{2r})$ with indices modulo $2r$, we define two sampled $r$-gons by keeping one vertex and skipping one periodically. For  $t\in\{1,2\}$, let
$$Q^{(t)}(K):=(v_{t},v_{t+2},\dots,v_{t+2r-2}).$$
Each $2r$-gon $K$ admits exactly two such $r$-gons. Notice the structure this imposes on $K \setminus Q$: the $r$ skipped vertices must lie exactly one in each of the $r$ exterior wedges of the edges of $Q$. Here, by the exterior wedge associated with an edge, we mean the region spanned by this edge and the rays coming from the two neighbouring edges.

Now, consider an arbitrary empty convex $r$-gon $Q=(u_1, \dots, u_r)$. For each $i \in [r]$, let $e_i = u_i u_{i+1}$ be its edges (indices modulo $r$) and let $W_i$ be the corresponding open exterior wedge bounded  by $e_i$ and the supporting rays  $\overrightarrow{u_{i-1}u_i}$ and $\overrightarrow{u_{i+2}u_{i+1}}$ . Since \(Q\) is convex, these wedges are pairwise disjoint. A valid extension of $Q$ into a $2r$-hole is defined by an $r$-tuple of points $M = (m_1, \dots, m_r)$ where $m_i \in W_i$, such that inserting $m_i$ between $u_i$ and $u_{i+1}$ yields an empty convex $2r$-gon. We formalize a structural property of these extensions. 

\begin{lemma}\label{lemma:virtual_buffer}
 If two valid extensions $M = (m_1, \dots, m_r)$ and $N = (n_1, \dots, n_r)$ of $Q$ share a point such that $m_i = n_i$ for some $i$ and there are no $(2r+1)$ holes in the point set $X$, then $M = N$.
\end{lemma}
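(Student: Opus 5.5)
The plan is to argue by contradiction, pushing the agreement of $M$ and $N$ around $Q$ one wedge at a time. Suppose $M\neq N$ but $m_i=n_i$ for some $i$. Since the indices are cyclic and the agreement set is nonempty but not everything, there is an index $j$ with $m_{j-1}=n_{j-1}$ and $m_j\neq n_j$. Replacing $i$ by $j-1$, we may therefore assume the two extensions agree in the wedge $W_{j-1}$ and differ in the neighbouring wedge $W_j$. The goal is to show that one of
\[
Q\cup M\cup\{n_j\}\qquad\text{or}\qquad Q\cup N\cup\{m_j\}
\]
is a $(2r+1)$-hole, which contradicts the hypothesis.

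\textbf{Step 1 (local picture in $W_j$).} Both $m_j$ and $n_j$ lie in the open wedge $W_j$. The triangles $\triangle u_j m_j u_{j+1}\subseteq\operatorname{conv}(Q\cup M)$ and $\triangle u_j n_j u_{j+1}\subseteq\operatorname{conv}(Q\cup N)$ are empty. So neither of $m_j,n_j$ lies in the triangle spanned by $u_j,u_{j+1}$ and the other. It follows that $u_j,u_{j+1},m_j,n_j$ are in convex position, with $u_ju_{j+1}$ an edge of their hull. By general position they appear along the outer chain in one of two orders, say $u_j,m_j,n_j,u_{j+1}$ after possibly swapping the roles of $M$ and $N$. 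This ordering decides which of the two candidate sets I aim for.

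\textbf{Step 2 (emptiness via \Cref{lem:three-holes-union}).} To certify that the enlarged set is a hole, I would write it as a union of three holes satisfying the hypotheses of \Cref{lem:three-holes-union}:
\begin{itemize}
\item $K:=Q\cup M$;
\item $L:=Q\cup\{n_j\}\cup\{m_l : m_l=n_l\}\subseteq Q\cup N$, which is a hole because every subset of a hole is a hole;
\item a third hole $T$ containing $m_j$ and $n_j$ together with the points of $M$ that lie in only one of $K$ and $L$.
\end{itemize}
The natural candidate is $T:=(Q\cup M\setminus\{m_j\})\cup\{n_j\}$ restricted to a suitable subset. Its emptiness near $W_j$ should follow from the two empty triangles and the convex order from Step 1. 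The points $u_j,u_{j+1}$ serve as the required two common points in all three holes. The shared vertex $m_{j-1}=n_{j-1}$ is what lets me connect the region on the $u_j$ side, since the triangle $\triangle m_{j-1}u_jn_j$ lies in $\operatorname{conv}(Q\cup N)$ and is therefore empty.

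\textbf{Step 3 (convexity).} Separately, I would check that the inserted point keeps the polygon convex. The point $n_j$ must lie on the correct side of the lines through $m_{j-1}u_j$ and through $u_{j+1}m_{j+1}$. For the first line this comes from convexity of $Q\cup N$, which contains $m_{j-1}$ as a vertex. For the second line it comes from the ordering in $W_j$ established in Step 1.

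\textbf{Main obstacle.} I expect Step 2 to be the hardest part, and in particular guaranteeing that every point of the union lies in at least two of the three holes when $M$ and $N$ also differ in other wedges. If a clean choice of $T$ fails there, my fallback is to prove directly that every triangle of $Q\cup M\cup\{n_j\}$ is empty. Any such triangle not already contained in $Q\cup M$ uses $n_j$. I would split it into triangles containing $u_j$ or $u_{j+1}$, each lying in $\operatorname{conv}(Q\cup N)$ or $\operatorname{conv}(Q\cup M)$, following the same covering argument as in the proof of \Cref{lem:three-holes-union}.
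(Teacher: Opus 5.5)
There is a genuine gap. The target of Step~2, that $Q\cup M\cup\{n_j\}$ or $Q\cup N\cup\{m_j\}$ is a $(2r+1)$-hole, is false in general.

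Take the order $u_j,m_j,n_j,u_{j+1}$ from Step~1. Let $c$ be the crossing point of the diagonals $u_jn_j$ and $m_ju_{j+1}$ of the convex quadrilateral $u_jm_jn_ju_{j+1}$. The triangle $\triangle m_jcn_j$ lies beyond the edge $m_ju_{j+1}$ of $Q\cup M$ and beyond the edge $u_jn_j$ of $Q\cup N$. So the emptiness of the two $2r$-holes says nothing about it. A point $x\in X$ in its interior is compatible with $M,N$ being valid and agreeing at $j-1$. Yet $x\in\triangle u_jm_jn_j\subseteq\operatorname{conv}(Q\cup N\cup\{m_j\})$ and $x\in\triangle m_jn_ju_{j+1}\subseteq\operatorname{conv}(Q\cup M\cup\{n_j\})$, so neither candidate is a hole. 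Such an $X$ does contain a $(2r+1)$-hole, as the lemma predicts, just not one of yours.

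Consequently no admissible $T$ exists in Step~2, since otherwise \Cref{lem:three-holes-union} would certify a non-hole. Your fallback of checking triangles directly fails for the same reason. The covering argument can only certify triangles already covered by hulls of known holes, and $\triangle m_jcn_j$ is not.

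The missing idea is an extremal choice of the added vertex. Among the points of $X\setminus\{u_j,n_j\}$ in $\triangle u_jm_jn_j$ (one of them is $m_j$), take $z$ closest to the line $u_jn_j$. Then $\triangle u_jzn_j$ is empty, and inserting $z$ between $u_j$ and $n_j$ in $Q\cup N$ yields the $(2r+1)$-hole. The three convexity checks are:
\begin{itemize}
\item at $u_j$: this is where $m_{j-1}=n_{j-1}$ is used, since both $m_j$ and $n_j$, hence $z$, lie strictly on the inner side of the line $n_{j-1}u_j$;
\item at $z$: $z$ lies beyond $u_jn_j$;
\item at $n_j$: this follows from the convex chain of Step~1.
\end{itemize}

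This also corrects the orientation in your Steps~2--3. The new point has to sit next to $u_j$, on the side where the extensions agree; the convex candidate is $Q\cup N\cup\{m_j\}$, not $Q\cup M\cup\{n_j\}$. In the latter, $n_j$ is adjacent to $u_{j+1}$. The order $u_j,m_j,n_j,u_{j+1}$ places $n_j$ beyond the line $m_ju_{j+1}$, which pushes it towards the wrong side of the line $u_{j+1}m_{j+1}$. So when $m_{j+1}\neq n_{j+1}$, the turn $n_j,u_{j+1},m_{j+1}$ can be reflex. This contradicts your claim in Step~3 that the ordering settles the second line.
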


\begin{proof}
Suppose that \(M\neq N\). Since they agree somewhere, after a cyclic relabelling, we may assume that \(m_i=n_i\) but \(m_{i+1}\neq n_{i+1}\). After interchanging \(M\) and \(N\), assume that \(m_{i+1}\) lies on the exterior side of the edge \(u_{i+1}n_{i+1}\) of \(Q\cup N\). 
This makes the turn 
\(u_{i+1},m_{i+1},n_{i+1},u_{i+2}\) convex. 
If this chain was not convex, its only possible failure would be at \(n_{i+1}\), which would place \(n_{i+1}\) inside \(\triangle u_{i+1}m_{i+1}u_{i+2}\). This contradicts the emptiness of \(Q\cup M\). 
Among the points of \(X\) in \(\triangle u_{i+1}m_{i+1}n_{i+1}\), other than \(u_{i+1}\) and \(n_{i+1}\), choose \(z\) with minimum distance from the line \(u_{i+1}n_{i+1}\). Such a point exists because \(m_{i+1}\) is available. Then \(\triangle u_{i+1}zn_{i+1}\) is empty since every point in its interior is closer to this line.

The point \(z\) lies on the exterior side of the edge \(u_{i+1}n_{i+1}\), so the corner at \(z\) is convex. Since \(z\in\triangle u_{i+1}m_{i+1}n_{i+1}\), convexity of \(u_{i+1},m_{i+1},n_{i+1},u_{i+2}\)  shows that the corner at \(n_{i+1}\) also remains convex. Finally, convexity at \(u_{i+1}\) follows from \(m_i=n_i\) and the validity of both extensions. Thus, replacing \(u_{i+1}n_{i+1}\) by \(u_{i+1}z\) and
\(zn_{i+1}\) attaches exactly the empty triangle \(\triangle u_{i+1}zn_{i+1}\) to the empty polygon \(Q\cup N\), producing a \((2r+1)\)-hole, a contradiction.
\end{proof}
\begin{figure}[htbp]
\centering
\begin{tikzpicture}[
    scale=0.95,
    every node/.style={font=\small},
    boundary/.style={black, line width=0.45pt},
    aux/.style={black, line width=0.45pt},
    redpath/.style={red!80!black, line width=0.65pt},
    bluepath/.style={blue!75!black, line width=0.65pt},
    purplepath/.style={purple!90!black, line width=0.65pt},
    greenpath/.style={green!55!black, line width=0.55pt, dashed},
    pt/.style={circle, fill=black, inner sep=1.4pt},
    redpt/.style={circle, fill=red!80!black, inner sep=1.5pt},
    bluept/.style={circle, fill=blue!75!black, inner sep=1.5pt},
    purplept/.style={circle, fill=purple!85!black, inner sep=1.5pt},
    greenpt/.style={circle, fill=green!55!black, inner sep=1.5pt}
]

\coordinate (mui)  at (0,0);
\coordinate (mi)   at (1,2.0);
\coordinate (A)    at (2.60,3.00);
\coordinate (B)    at (7.05,3.00);
\coordinate (C)    at  (8.55,0.10);
\coordinate (m1)   at (3.85,3.70);
\coordinate (n1)   at (5.85,3.90);
\coordinate (z)    at (4.55,3.70);
\coordinate (n2)   at (8.55,1.25);

\draw[boundary] ($(mui)!-0!(A)$) -- ($(mui)!2!(A)$);
\draw[boundary] ($(C)!0!(B)$) -- ($(C)!2!(B)$);
\draw[aux] (-0.70,3.00) -- (8.75,3.00);

\draw[redpath]   (A) -- (m1) -- (B);
\draw[purplepath] (mui) -- (mi) -- (A)  ;
\draw[purplepath]   (B)-- (n2)-- (C);
\draw[bluepath]    (A) -- (n1) -- (B) ;

\node[pt] at (mui) {};
\node[pt] at (mi) {};
\node[pt] at (A) {};
\node[pt] at (B) {};
\node[pt] at (C) {};

\node[purplept] at (n1) {};
\node[redpt] at (m1) {};
\node[greenpt] at (z) {};
\node[purplept] at (n2) {};

\node[ left] at (mui) {$u_i$};
\node[ left] at (mi) {$m_i=n_i$};

\node[below right] at (A) {$u_{i+1}$};
\node[above right] at (B) {$u_{i+2}$};
\node[right] at (C) {$u_{i+3}$};

\node[right] at (4.80,4.85) {$W_{i+1}$};
\node[left] at (1.35,2.45) {$W_i$};

\node[above , purple!85!black] at (n1) {$n_{i+1}$};
\node[above, red!80!black] at (m1) {$m_{i+1}$};
\node[above, green!55!black] at (z) {$z$};
\node[right, purple!85!black] at (n2) {$n_{i+2}$};

\end{tikzpicture}
\caption{Local configuration in the proof of Lemma~\ref{lemma:virtual_buffer}.}
\label{fig:lemma31-visual-aid}
\end{figure}

We are now ready to finish the proof. 

\begin{proof}[Proof of Theorem~\ref{thm:upper-bound-fixed-k}]
\textbf{Even case}: Write \(k=2r\), and let \(X\) be an \(n\)-point set containing no
\((2r+1)\)-hole. Each \(2r\)-hole has exactly two alternating sampled
\(r\)-holes. Hence, if \(N(Q)\) denotes the number of valid extensions
of an empty \(r\)-hole \(Q\), then
$2M_{2r}(X)=\sum_Q N(Q).$ Fix \(Q\). By \Cref{lemma:virtual_buffer}, distinct extensions of \(Q\) cannot share a point. Each extension uses \(r\) points of \(X\setminus Q\),
and therefore $rN(Q)\le n-r.$ Since there are at most \(\binom nr\) empty \(r\)-holes, it follows that
$$
2M_{2r}(X)
\le
\binom nr\frac{n-r}{r}.
$$
Taking the maximum over all admissible \(X\), we obtain
$$
m_{2r,2r+1,n} \le\frac{n-r}{2r}\binom nr =\frac{2n-k/2}{k}\binom{n}{k/2}.
$$
\textbf{Odd case:} Now let \(k=2r+1\). Each \(k\)-hole has \(2r+1\) sampled \((r+1)\)-holes
$$
Q^{(t)}(K)=(v_t,v_{t+1},v_{t+3},\dots,v_{t+2r-1}),
$$
where we mark the unique unsplit edge \(e=v_tv_{t+1}\). Fix a marked sample \((Q,e)\), and place an auxiliary point \(p\) sufficiently close to the relative interior of \(e\) on its exterior side, so that appending \(p\) to any extension of \((Q,e)\) produces a \((2r+2)\)-hole in \(X\cup\{p\}\). The point set \(X\cup\{p\}\) contains no \((2r+3)\)-hole, since such a hole would contain a \((2r+2)\)-hole consisting entirely of points of \(X\). Thus, two distinct extensions of \((Q,e)\), after appending \(p\), would be two valid extensions of \(Q\) sharing the point \(p\), contradicting \Cref{lemma:virtual_buffer}. Hence every marked sample has at most one extension. Since there are at most \((r+1)\binom{n}{r+1}\) marked samples and every \((2r+1)\)-hole gives \(2r+1\) of them, double counting gives
$$
m_{2r+1,2r+2,n}
\le \frac{r+1}{2r+1}\binom{n}{r+1}=\frac{\lceil k/2\rceil}{k}\binom{n}{\lfloor k/2\rfloor+1}.
$$
\end{proof}
\begin{remark}
   Let us compare this bound for $n=\alpha k$ with $k\rightarrow\infty$ with the one obtained in \Cref{thm:three-halves}. Using  Stirling's formula, it turns out that for $\alpha\leq2.41$, the bound obtained in \Cref{thm:three-halves} provides a sharper estimate, but not for larger $\alpha$.
\end{remark}
\subsection*{Horton set  construction}

We use the standard recursive Horton sets. Let \(H=\{p_1,\dots,p_m\}\) be a finite
point set with increasing \(x\)-coordinates, and write \(H_0=\{p_2,p_4,\dots\}\) and
\(H_1=\{p_1,p_3,\dots\}\). A Horton set is defined recursively: sets of size at most one
are Horton sets, and for \(|H|>1\), both \(H_0\) and \(H_1\) are Horton sets, with one of them
lying high above the other. We shall use the standard facts that Horton sets exist in
every size, contain no \(7\)-hole, and are \(4\)-closed from above and from below; see
Horton~\cite{Horton} and Matou\v{s}ek~\cite[Section~3.2]{MatousekLectures}. Here, $4$-closed from above and from below means that every four-point cup or cap in the Horton set has a further point of the set on its
inward side, with $x$-coordinate between two consecutive vertices of
the cup or cap.
\begin{figure}[htbp]
    \centering
\begin{tikzpicture}[
    scale=0.85,
    every node/.style={font=\small},
    low/.style={circle,fill=blue!70!black,inner sep=1.7pt},
    high/.style={circle,fill=red!75!black,inner sep=1.7pt},
    closept/.style={circle,fill=orange!90!black,draw=black,inner sep=2pt},
    cup/.style={thick,blue!70!black},
    cap/.style={thick,green!45!black},
    hint/.style={dashed,gray!65}
]


\coordinate (a) at (0,0);
\coordinate (c) at (2,1.0);
\coordinate (e) at (4,0.45);
\coordinate (g) at (6,1.5);

\coordinate (b) at (1,4.2);
\coordinate (d) at (3,5.3);
\coordinate (f) at (5,4.9);
\coordinate (h) at (7,6.1);

\draw[hint] (b) -- (h);
\draw[hint] (a) -- (g);
\draw[hint] (d) -- (f);
\draw[hint] (c) -- (e);

\draw[blue!50] (a)--(c)--(e)--(g);
\draw[red!50]  (b)--(d)--(f)--(h);

\foreach \P in {a,c,e,g} \node[low] at (\P) {};
\foreach \P in {b,d,f,h} \node[high] at (\P) {};

\node[blue!70!black] at (6.7,1.1) {$H_0$};
\node[red!75!black] at (7.4,5.8) {$H_1$};

\begin{scope}[xshift=9.4cm]

\coordinate (p1) at (0,4.55);
\coordinate (p2) at (1.2,3.85);
\coordinate (p3) at (2.4,3.95);
\coordinate (p4) at (3.6,4.65);
\coordinate (q)  at (1.85,5.55);

\fill[orange!15] (p1)--(p2)--(p3)--(p4)--(3.6,5.9)--(0,5.9)--cycle;
\draw[cup] (p1)--(p2)--(p3)--(p4);
\draw[hint] (p1)--(p4);

\foreach \P/\lab in {p1/$p_1$,p2/$p_2$,p3/$p_3$,p4/$p_4$}
    \node[low,label=below:\lab] at (\P) {};

\node[closept] at (q) {};

\coordinate (r1) at (0,1.25);
\coordinate (r2) at (1.2,1.95);
\coordinate (r3) at (2.4,1.85);
\coordinate (r4) at (3.6,1.15);
\coordinate (s)  at (1.85,0.30);

\fill[orange!15] (r1)--(r2)--(r3)--(r4)--(3.6,-0.15)--(0,-0.15)--cycle;
\draw[cup] (r1)--(r2)--(r3)--(r4);
\draw[hint] (r1)--(r4);

\foreach \P/\lab in {r1/$p_1$,r2/$p_2$,r3/$p_3$,r4/$p_4$}
    \node[low,label=above:\lab] at (\P) {};

\node[closept] at (s) {};

\end{scope}

\end{tikzpicture}

    \caption{Construction of Horton set and illustration of the 4-closedness property}
    \label{fig:Horton}
\end{figure}

We shall also use the following flat perturbation form of the Horton construction.

\begin{lemma}\label{lem:horton-blocks}
Let \(t\ge 3\), and let \(v_1,\dots,v_t\) be the vertices of a strictly convex
\(t\)-gon in cyclic order. Let \(H_1^\ast,\dots,H_t^\ast\) be finite Horton sets.
Then there are affine copies \(H_i\) of \(H_i^\ast\), placed in pairwise disjoint
sufficiently small neighbourhoods of the vertices \(v_i\), such that the following hold.

\begin{enumerate}
\item If, for every \(i\in[t]\), the set \(C_i\subseteq H_i\) consists of three
consecutive points in the local \(x\)-order and forms a convex cup in the local
coordinates, then \(C_1\cup\cdots\cup C_t\) is a hole.

\item If \(Y\subseteq \bigcup_{i=1}^t H_i\) is a hole not contained in a single block,
then \(|Y\cap H_i|\le 3\) for every \(i\in[t]\).
\end{enumerate}
\end{lemma}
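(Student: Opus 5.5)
We place each Horton block as a very flat, tiny affine copy oriented so that its local \(x\)-axis is parallel to a supporting line of the \(t\)-gon at \(v_i\), with the outward normal pointing "down" in local coordinates. Concretely, we fix a line \(\ell_i\) through \(v_i\) that supports the polygon only at \(v_i\). We then apply to \(H_i^\ast\) the affine map \((x,y)\mapsto v_i+\varepsilon x\,d_i-\varepsilon\delta y\,n_i\). Here \(d_i\) is the direction of \(\ell_i\), \(n_i\) is the outward normal, and \(\delta\ll 1\) flattens the block. With this orientation, a local convex cup is a chain bulging outward, away from the polygon. The other blocks, seen from block \(i\), lie in a tiny neighbourhood of directions pointing into the polygon. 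The scale \(\varepsilon\) and flatness \(\delta\) are chosen last, small enough that finitely many strict orientation conditions hold simultaneously.

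\textbf{Part (1).} We take \(C_i=\{a_i,b_i,c_i\}\), three consecutive points of \(H_i\) forming a cup, so \(b_i\) lies outward of segment \(a_ic_i\). Convex position of \(C_1\cup\dots\cup C_t\) follows from two facts. First, each cup is strictly convex toward the outside. Second, all other blocks lie well inside the half-plane bounded by the line \(a_ic_i\), since that line is nearly parallel to \(\ell_i\) when \(\delta\) is small. Together with the convexity of the \(t\)-gon at block-to-block transitions, this gives convex position. For emptiness, we must show that no other point of \(\bigcup H_j\) lies in the hull. Points of foreign blocks \(H_j\) lie near \(v_j\). Any such point lies near \(v_j\) on the outer side of the segments around \(C_j\), except possibly points of \(H_j\) between \(a_j\) and \(c_j\) in local \(x\)-order; by consecutiveness there are none. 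It remains to handle points of \(H_j\) lying left of \(a_j\) or right of \(c_j\) in local order, whose height in the flattened picture could still place them inside the hull. Here we use flatness: inside the block the hull's boundary near \(H_j\) is the chain \(a_jb_jc_j\) continued by two edges going to the neighbouring blocks. After flattening by \(\delta\), those edges look, in local coordinates, like steep lines. More precisely, they become lines of slope of order \(1/\delta\) in rescaled local coordinates, so any point of \(H_j\) outside the \(x\)-interval \([a_j,c_j]\) is cut off. I expect making this precise to be the main obstacle. The issue is choosing the order of quantifiers: first \(\delta\) relative to the Horton slopes, then \(\varepsilon\) relative to the polygon, so that the cut-off holds uniformly over all finitely many choices of triples.

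\textbf{Part (2).} Let \(Y\) be a hole meeting some block \(H_j\), \(j\neq i\), and suppose \(|Y\cap H_i|\ge 4\). Points of \(Y\) outside \(H_i\) lie, from \(H_i\)'s viewpoint, essentially in one direction, inward of the polygon. This direction corresponds to local "up" after the flattening, with vertical distances amplified by \(1/\delta\). Hence \(Y\cap H_i\) must lie on the boundary of \(\operatorname{conv}(Y)\) as a chain visible from below. In local coordinates this means it forms a cup, the lower part of its hull, with the foreign point \(q\) effectively at vertical infinity. So \(Y\cap H_i\) contains a four-point cup \(p_1,p_2,p_3,p_4\). By 4-closedness from above there is \(s\in H_i\) on the inward side (above) of the cup with \(x\)-coordinate between two consecutive cup vertices \(p_j,p_{j+1}\). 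Then \(s\) lies in the region bounded by \(p_j,p_{j+1}\) and the two rays towards \(q\). For flat enough blocks this region is contained in \(\operatorname{conv}\{p_j,p_{j+1},q\}\), contradicting emptiness. The analysis of the case where \(Y\cap H_i\) is not itself a cup is handled by the same direction argument: a point of \(Y\cap H_i\) above the lower chain would lie in a triangle formed with \(q\), so \(Y\) would not be in convex position.
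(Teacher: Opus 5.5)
Your proposal is correct and follows essentially the paper's argument. Both use tiny, flattened affine copies with cups facing outward, so that in block-$i$ coordinates every foreign point sits near vertical infinity; part~(1) then follows from consecutiveness of the triple, and part~(2) from $4$-closedness placing the closing point inside $\triangle p_jp_{j+1}q$. Your explicit exclusion of non-cups is slightly more careful than the paper's unexplained ``cup or cap''. One correction to your quantifier remark: with $w=v_j-v_i$, the local slope of an edge to another block is about $|w\cdot n_i|/(\delta\,|w\cdot d_i|)$, which does not improve as $\varepsilon\to0$, so you should fix $\varepsilon$ small relative to the polygon first and then take $\delta$ small relative to both the Horton sets and the polygon's angles with $\ell_i$.
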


\begin{proof}
Choose pairwise disjoint neighbourhoods \(U_i\) of the vertices \(v_i\),
small enough to preserve their cyclic convex order. Inside each \(U_i\),
place a sufficiently small and flat affine copy \(H_i\) of \(H_i^\ast\),
oriented so that local cups face outwards.

Since only finitely many configurations are involved, the copies may be chosen so that the following two flatness properties hold. First, any three consecutive points forming a local cup appear as an
outward convex boundary chain, and the part of the resulting convex hull inside \(U_i\) contains no other point of \(H_i\). Second, if four points of \(H_i\) form a local cup or cap and \(z\in H_i\) closes it on its
inward side between two consecutive vertices \(q_j,q_{j+1}\), then \(z\in\operatorname{int}\triangle(q_j,q_{j+1},y)\) for every point
\(y\) from another block lying on that inward side.

For part~(1), let \(C_i\subseteq H_i\) be the chosen consecutive
three-point cups. By the first property, their union is in convex
position, with the points appearing block by block in cyclic order.
Moreover, no point of \(H_i\setminus C_i\) lies in its convex hull.
Applying this to every block shows that \(C_1\cup\cdots\cup C_t\) is a
hole.

For part~(2), let \(Y\) be a hole meeting more than one block, and
suppose that \(|Y\cap H_i|\ge4\). The points of \(Y\cap H_i\) form a
local cup or cap along the boundary of \(\operatorname{conv}(Y)\). Choose four consecutive such points \(q_1,q_2,q_3,q_4\). By the \(4\)-closedness of the Horton set, some \(z\in H_i\) closes this cup or
cap on its inward side, with its local \(x\)-coordinate between two consecutive points \(q_j,q_{j+1}\). In particular, \(z\notin Y\).
For any \(y\in Y\setminus H_i\), the second flatness property gives \(z\in\operatorname{int}\triangle(q_j,q_{j+1},y)\), and hence
\(z\in\operatorname{int}\operatorname{conv}(Y)\), contradicting that \(Y\) is a hole. Therefore \(|Y\cap H_i|\le3\) for every~\(i\).
\end{proof}
Now we are ready to finish the desired construction.
\begin{proof}[Proof of Theorem~\ref{thm:lower-bound-fixed-k}]
Write \(k=3r\) and \(n=mr+s\), where \(0\le s<r\). Let
\(m_i=m+1\) for \(i\le s\) and \(m_i=m\) otherwise, so that
\(\sum_{i=1}^r m_i=n\). For \(r\ge3\), place \(m_i\)-point Horton
blocks \(H_i\) around the vertices of a strictly convex \(r\)-gon as in
\Cref{lem:horton-blocks}. When \(r=2\), use the analogous placement of
two oppositely oriented flat blocks.

List the points of \(H_i\) in their local \(x\)-order. Of the \(m_i-2\)
triples of consecutive points, one starting-index parity consists of
cups and the other of caps. After reflecting the block if necessary, there is therefore a family \(\mathcal T_i\) of at least \((m_i-2)/2\) consecutive three-point cups. By \Cref{lem:horton-blocks}(1), choosing one triple from each
\(\mathcal T_i\) produces a \(3r=k\)-hole. Distinct choices produce
distinct holes, and hence
$$
M_k(X)\ge \prod_{i=1}^r|\mathcal T_i|
   \ge \prod_{i=1}^r\frac{m_i-2}{2}.$$
It remains to check that \(X\) contains no \((k+1)\)-hole. A hole meeting more than one block contains at most three points from each block by \Cref{lem:horton-blocks}(2), and therefore has at most
\(3r=k\) vertices. A hole contained in a single block has at most six vertices, since Horton sets contain no \(7\)-hole and consequently no larger hole. Thus \(X\) contains no \((k+1)\)-hole. Finally, \(m_i\ge\lfloor n/r\rfloor\ge n/r-1\), so
$$
m_{k,k+1,n}\ge M_k(X)
 \ge \left(\frac{n-3r}{2r}\right)^r
 =\left(\frac{3(n-k)}{2k}\right)^{k/3}.
$$
We finish  with a brief comment on how the case when $3\nmid k$ is handled. Let $p=k-3\lfloor\frac{k}{3}\rfloor$. We form $r=\lfloor \frac{k}{3}\rfloor$ Horton sets from $n-p$ points of size \(m_i\in\{\lfloor (n-p)/r\rfloor,\lceil (n-p)/r\rceil\}\). We arrange these blocks together with a block of the remaining $p$ points in the $\lceil \frac{k}{3}\rceil$-gon arranged as in the case above.
\end{proof}

\begin{figure}[htbp]
\centering
\begin{tikzpicture}[scale=.9]
\coordinate (b1) at (0.00,2.35);
\coordinate (b2) at (2.24,0.73);
\coordinate (b3) at (1.38,-1.90);
\coordinate (b4) at (-1.38,-1.90);
\coordinate (b5) at (-2.24,0.73);
\draw[guide] (b1)--(b2)--(b3)--(b4)--(b5)--cycle;

\begin{scope}[shift={(b1)}, rotate=0]
  \draw[black!18, fill=black!2] (0,0) ellipse (.72 and .20);
  \coordinate (h1a) at (-.55,-.05); \coordinate (h1b) at (-.33,-.08); \coordinate (h1c) at (-.11,.13); \coordinate (h1d) at (.11,-.07); \coordinate (h1e) at (.33,.02); \coordinate (h1f) at (.55,-.04);
  \draw[black!55] (h1a)--(h1b)--(h1c)--(h1d)--(h1e)--(h1f);
  \foreach \p in {h1a,h1e,h1f}{\node[blockpt] at (\p) {};}
  \foreach \p in {h1b,h1c,h1d}{\node[blockchosen] at (\p) {};}
  \draw[hole] (h1b)--(h1c)--(h1d);
\end{scope}
\begin{scope}[shift={(b2)}, rotate=-72]
  \draw[black!18, fill=black!2] (0,0) ellipse (.72 and .20);
  \coordinate (h2a) at (-.55,-.05); \coordinate (h2b) at (-.33,-.08); \coordinate (h2c) at (-.11,.13); \coordinate (h2d) at (.11,-.07); \coordinate (h2e) at (.33,.02); \coordinate (h2f) at (.55,-.04);
  \draw[black!55] (h2a)--(h2b)--(h2c)--(h2d)--(h2e)--(h2f);
  \foreach \p in {h2a,h2e,h2f}{\node[blockpt] at (\p) {};}
  \foreach \p in {h2b,h2c,h2d}{\node[blockchosen] at (\p) {};}
  \draw[hole] (h2b)--(h2c)--(h2d);
\end{scope}
\begin{scope}[shift={(b3)}, rotate=-144]
  \draw[black!18, fill=black!2] (0,0) ellipse (.72 and .20);
  \coordinate (h3a) at (-.55,-.05); \coordinate (h3b) at (-.33,-.08); \coordinate (h3c) at (-.11,.13); \coordinate (h3d) at (.11,-.07); \coordinate (h3e) at (.33,.02); \coordinate (h3f) at (.55,-.04);
  \draw[black!55] (h3a)--(h3b)--(h3c)--(h3d)--(h3e)--(h3f);
  \foreach \p in {h3a,h3e,h3f}{\node[blockpt] at (\p) {};}
  \foreach \p in {h3b,h3c,h3d}{\node[blockchosen] at (\p) {};}
  \draw[hole] (h3b)--(h3c)--(h3d);
\end{scope}
\begin{scope}[shift={(b4)}, rotate=-216]
  \draw[black!18, fill=black!2] (0,0) ellipse (.72 and .20);
  \coordinate (h4a) at (-.55,-.05); \coordinate (h4b) at (-.33,-.08); \coordinate (h4c) at (-.11,.13); \coordinate (h4d) at (.11,-.07); \coordinate (h4e) at (.33,.02); \coordinate (h4f) at (.55,-.04);
  \draw[black!55] (h4a)--(h4b)--(h4c)--(h4d)--(h4e)--(h4f);
  \foreach \p in {h4a,h4e,h4f}{\node[blockpt] at (\p) {};}
  \foreach \p in {h4b,h4c,h4d}{\node[blockchosen] at (\p) {};}
  \draw[hole] (h4b)--(h4c)--(h4d);
\end{scope}
\begin{scope}[shift={(b5)}, rotate=-288]
  \draw[black!18, fill=black!2] (0,0) ellipse (.72 and .20);
  \coordinate (h5a) at (-.55,-.05); \coordinate (h5b) at (-.33,-.08); \coordinate (h5c) at (-.11,.13); \coordinate (h5d) at (.11,-.07); \coordinate (h5e) at (.33,.02); \coordinate (h5f) at (.55,-.04);
  \draw[black!55] (h5a)--(h5b)--(h5c)--(h5d)--(h5e)--(h5f);
  \foreach \p in {h5a,h5e,h5f}{\node[blockpt] at (\p) {};}
  \foreach \p in {h5b,h5c,h5d}{\node[blockchosen] at (\p) {};}
  \draw[hole] (h5b)--(h5c)--(h5d);
\end{scope}

\draw[hole, opacity=.38] (h1b)--(h1c)--(h1d)--(h2b)--(h2c)--(h2d)--(h3b)--(h3c)--(h3d)--(h4b)--(h4c)--(h4d)--(h5b)--(h5c)--(h5d)--cycle;

\end{tikzpicture}
\caption{Construction for $r=5$ }
\label{fig:horton-block-lower-bound}
\end{figure}

\section*{Declaration of AI use}
The authors acknowledge the help of OpenAI's ChatGPT with TikZ.

\section*{Acknowledgements}
We would like to thank V\'it Jel\'inek and Pavel Valtr for many helpful conversations
during the Seminar on Combinatorial Problems at Charles University, where most of the
work on the paper was done. This work was supported by grant GA\v{C}R 23-04949X.
A. D\v{z}uklevski was also supported by GAUK grant VV--2025--2608/22 and by the
Visegrad Fund.

\bibliographystyle{cas-model2-names}
\bibliography{Holes/Bibliography}

@article{ErdosSzekeres1935,
  author  = {Paul Erd\H{o}s and George Szekeres},
  title   = {A Combinatorial Problem in Geometry},
  journal = {Compositio Mathematica},
  volume  = {2},
  pages   = {463--470},
  year    = {1935}
}

@article{Erdos1978,
  author  = {Paul Erd\H{o}s},
  title   = {Some More Problems on Elementary Geometry},
  journal = {Australian Mathematical Society Gazette},
  volume  = {5},
  number  = {2},
  pages   = {52--54},
  year    = {1978}
}

@article{Harborth1978,
  author  = {Heiko Harborth},
  title   = {Konvexe F\"unfecke in ebenen Punktmengen},
  journal = {Elemente der Mathematik},
  volume  = {33},
  pages   = {116--118},
  year    = {1978}
}

@article{Nicolas2007,
  author  = {Carlos M. Nicol\'as},
  title   = {The Empty Hexagon Theorem},
  journal = {Discrete \& Computational Geometry},
  volume  = {38},
  number  = {2},
  pages   = {389--397},
  year    = {2007},
  doi     = {10.1007/s00454-007-1343-6}
}

@article{Gerken2008,
  author  = {Tobias Gerken},
  title   = {Empty Convex Hexagons in Planar Point Sets},
  journal = {Discrete \& Computational Geometry},
  volume  = {39},
  number  = {1--3},
  pages   = {239--272},
  year    = {2008},
  doi     = {10.1007/s00454-007-9018-x}
}

@article{Overmars2002,
  author  = {Mark Overmars},
  title   = {Finding Sets of Points without Empty Convex 6-Gons},
  journal = {Discrete \& Computational Geometry},
  volume  = {29},
  number  = {1},
  pages   = {153--158},
  year    = {2002}
}

@incollection{HeuleScheucher2024,
  author    = {Marijn J. H. Heule and Manfred Scheucher},
  title     = {Happy Ending: An Empty Hexagon in Every Set of 30 Points},
  booktitle = {Tools and Algorithms for the Construction and Analysis of Systems},
  series    = {Lecture Notes in Computer Science},
  volume    = {14570},
  pages     = {61--80},
  publisher = {Springer, Cham},
  year      = {2024},
  doi       = {10.1007/978-3-031-57246-3_5}
}

@article{KatchalskiMeir1988,
  author  = {Meir Katchalski and Amram Meir},
  title   = {On Empty Triangles Determined by Points in the Plane},
  journal = {Acta Mathematica Hungarica},
  volume  = {51},
  number  = {3--4},
  pages   = {323--328},
  year    = {1988}
}

@article{BaranyFuredi1987,
  author  = {Imre B\'ar\'any and Zolt\'an F\"uredi},
  title   = {Empty Simplices in Euclidean Space},
  journal = {Canadian Mathematical Bulletin},
  volume  = {30},
  number  = {4},
  pages   = {436--445},
  year    = {1987}
}

@phdthesis{Dehnhardt1987,
  author = {Klemens Dehnhardt},
  title  = {Leere konvexe Vielecke in ebenen Punktmengen},
  school = {Universit\"at Bonn},
  year   = {1987}
}

@article{AichholzerEtAl2020,
  author  = {Oswin Aichholzer and Martin Balko and Thomas Hackl and Jan Kyn\v{c}l and Irene Parada and Manfred Scheucher and Pavel Valtr and Birgit Vogtenhuber},
  title   = {A Superlinear Lower Bound on the Number of 5-Holes},
  journal = {Journal of Combinatorial Theory, Series A},
  volume  = {173},
  pages   = {105236},
  year    = {2020}
}

@misc{AstudilloMarbanSolePi2026,
  author       = {Omar Astudillo-Marb\'an and Oriol Sol\'e-Pi},
  title        = {There Are Many 5-Holes},
  year         = {2026},
  eprint       = {2603.18484},
  archivePrefix= {arXiv},
  primaryClass = {math.CO}
}

@article{Horton,
  author  = {Joseph D. Horton},
  title   = {Sets with No Empty Convex 7-Gons},
  journal = {Canadian Mathematical Bulletin},
  volume  = {26},
  number  = {4},
  pages   = {482--484},
  year    = {1983},
  doi     = {10.4153/CMB-1983-077-8}
}

@book{MatousekLectures,
  author    = {Ji\v{r}{\'\i} Matou\v{s}ek},
  title     = {Lectures on Discrete Geometry},
  publisher = {Springer},
  year      = {2002},
  doi       = {10.1007/978-1-4613-0039-7}
}

@article{BaranyValtr2004,
  title={Planar point sets with a small number of empty convex polygons},
  author={B{\'a}r{\'a}ny, Imre and Valtr, Pavel},
  journal={Studia Scientiarum Mathematicarum Hungarica},
  volume={41},
  number={2},
  pages={243--266},
  year={2004},
  publisher={Akad{\'e}miai Kiad{\'o}}
}

@misc{SukZhou2026,
      title={On the maximum number of $k$-holes in point sets with no $(k + 1)$-hole}, 
      author={Andrew Suk and Su Zhou},
      year={2026},
      eprint={2606.05721},
      archivePrefix={arXiv},
      primaryClass={math.CO},
      url={https://arxiv.org/abs/2606.05721}, 
}

\end{document}